\DeclareMathOperator{\sinc}{sinc}
\DeclareMathOperator{\rank}{rank}
\def\d{\displaystyle}
\def\a{ {\cal A} }
\def\h{ {\cal H} }
\def\ele{ {\cal L} }
\def\b{ {\cal B} }
\def\u{ {\cal U} }
\def\o{ {\cal O} }
\def\p{ {\cal P} }
\def\k{ {\cal K} }
\def\G{ {\cal G} }
\def\w{ {\cal W} }
\def\noi{\noindent}
\def\g1{ \mathfrak{g}_1  }
\newtheorem{teo}{Theorem}[section]
\newtheorem{prop}[teo]{Proposition}
\newtheorem{lem}[teo]{Lemma}
\newtheorem{coro}[teo]{Corollary}
\theoremstyle{definition}
\newtheorem{rem}[teo]{Remark}
\title{$p$-Schatten commutators of projections}
\author{E. Andruchow, M. E. Di Iorio y Lucero}
\begin{document}

\maketitle 

\begin{abstract}
Let $\h=\h_+\oplus\h_-$ be a fixed orthogonal decomposition of the complex Hilbert space $\h$ in two infinite dimensional subspaces. We study the geometry of the set $\p^p$ of selfadjoint projections in the Banach algebra
$$
\a^p=\{A\in\b(\h): [A,E_+]\in\b_p(\h)\},
$$
where $E_+$ is the projection onto $\h_+$ and $\b_p(\h)$ is the Schatten ideal of $p$-summable operators ($1\le p <\infty$). The norm in $\a^p$ is defined in terms of the norms of the matrix entries of the operators given by the above decomposition. The space $\p^p$ is shown to be a differentiable $C^\infty$ submanifold of $\a^p$, and a homogeneous space of the  group of unitary operators in $\a^p$. The connected components of $\p^p$ are characterized, by means of a partition of $\p^p$ in nine classes, four discrete classes and five essential classes:
\begin{itemize}
\item
the first two corresponding to finite rank or co-rank, with the connected components parametrized by theses ranks;
\item
the next two discrete classes carrying a Fredholm index, which parametrizes its components;
\item
the remaining essential classes, which are connected.
\end{itemize}
  \end{abstract}
\bigskip

{\bf 2010 MSC: 46H35, 47B10, 58B05}  

{\bf Keywords: Projections, Schatten $p$-ideals}   
\section{Introduction}
Let $\h=\h_+\oplus\h_-$ be an orthogonal decomposition of the complex Hilbert space $\h$ in two infinite dimensional closed subspaces, with corresponding projections $E_+$ and $E_-$. Consider the algebra
$$
\a^p:=\{A\in\b(\h): [A,E_+]=AE_+-E_+A \in\b_p(\h)\},
$$
where $\b(\h)$ denotes the algebra of bounded linear operators in $\h$, and $\b_p(\h)$ is the Schatten ideal of $p$-summable operators ($1\le p<\infty$). $\a^p$ is a $*$-Banach algebra with a suitable norm ($*$ is the usual adjoint). The purpose of this paper is the study of the set $\p^p$ of selfadjoint projections in $\a^p$,
$$
\p^p=\{P\in\a^p: P^2=P^*=P\}.
$$
It is known \cite{zemanek}, \cite{cprbanach} that the set of idempotents  ($Q$ such that $Q^2=Q$) of a Banach algebra is a complemented submanifold of the algebra. Here we show that also the set of selfadjoint idempotents is a submanifold of the algebra, in the case of the algebra $\a^p$ (it can be proved to hold for arbitrary $*$-Banach algebras). 
The case $p=2$ was extensively treated in \cite{tumpach}.

We characterize the connected components of $\p^p$. First, we see  that $\p^p$ is partitioned in nine classes, four discrete classes $\mathbb{D}_j$, $1\le j\le 4$, and five essential classes $\mathbb{E}_j$, $1\le j \le 5$. The first two discrete classes correspond to the projections of finite rank or finite co-rank, and its connected components are characterized by these numbers. The next two discrete classes are more interesting, and correspond to the so called $p$-{\it restricted Grassmannian}, associated to $E_+$, and to $E_-$, respectively. Projections in a restricted Grassmannian carry an integer  Fredholm index, which in turn parametrizes the connected components of $\mathbb{D}_3$ and $\mathbb{D}_4$ (as with the former two, one passes from one class to the other with the symmetry $P\mapsto P^\perp=1-P$, and thus the geometric and topological of both pairs are similar). The remaining essential classes are shown to be connected.

Examples of discrete projections (in the $p=1$ restricted Grassmannian) of the decompostition $L^2(\mathbb{T})=H^2(\mathbb{D})\oplus H^2_-(\mathbb{D})$, where  $H^2(\mathbb{D})$ is  the Hardy space of the disk,   are the projections onto the the subspaces $\varphi H^2(\mathbb{D})$, for $\varphi$ a  smooth function of modulus one. The index given by (minus) the winding number of $\varphi$.

Examples of essential projections, again for $p=1$, are given for the decomposition $L^2(\mathbb{R}^n)=L^2(\Omega)\oplus L^2(\Omega^c)$, where $\Omega\subset\mathbb{R}^n$ is a measurable set  with positive finite measure. In this setting, the projection $FE_+F^{-1}$ ($F=$  Fourier-Plancherel transform), onto the space of functions in $L^2(\mathbb{R}^n)$ with Fourier transform supported in $\Omega$, is an essential projection.

This study is a continuation of \cite{conmcomp}, where the ideal of compact operators was considered. Some of the techniques and results are similar in both contexts, the main difficulty in the case at hand ($p$-Schatten ideals) is that  the structure algebra $\a^p$ is a Banach algebra, whereas  in the compact case it is a C$^*$-algebra. For instance, we need to prove the smooth local structure of the group $\u_{\a^p}$ of unitary operators in $\a^p$, which acts in $\p^p$. 

We do not know if the geodesics of the Grassmann manifold of $\h$, lying in $\p^p$ (that is, with initial velocity in $\a^p$), are short for the Finsler metric given by the norm of the Banach algebra $\a^p$. However, we show that for the case of the discrete classes $\mathbb{D}_3$ and $\mathbb{D}_4$ (corresponding to the $p$-restricted Grassmannian), the connected component containing a given projection $P$, is a submanifold of the affine space $P+\b_p(\h)$, which carries naturally the Schatten $p$-norm. With the Finsler metric given by this norm, the geodesics of the full Grassmannian of $\h$, lying  inside this component, are short.

\section{Preliminaries}
For $1\le p <\infty$, we denote by $\b_p(\h)$ the ideal of $p$-Schatten operators in $\h$, i.e., $\b_p(\h)=\{T\in\b(\h): Tr(|T|^p)<\infty\}$, with its norm $\|T\|_p=Tr^{1/p}(|T|^p)$.  We fix an orthogonal decomposition 
$$
\h=\h_+\oplus\h_-
$$
with corresponding projections $E_+$ and $E_-$. We make the assumption that both $\h_+$ and $\h_-$ are infinite dimensional. Denote by $\p(\h)=\p$ the set of all orthogonal projections in $\h$.  We are interested in the set
$$
\p^p_{\h_+}=\p^p:=\{P\in\p: [P,E_+]\in\b_p(\h)\}.
$$
Accordingly, we denote by 
$$
\a^p_{\h_+}=\a^p:=\{A\in\b(\h): [A,E_+]\in\b_p(\h)\}.
$$
With $\a^p_h$ and $\a^p_{ah}$ we denote, respectively, the sets of selfadjoint and anti-Hermitian elements of $\a^p$.

In terms of the decomposition $\h=\h_+\oplus\h_-$, operators in $\h$ can be written as $2\times 2$ matrices. It is clear that elements of $\a^p$ are characterized as those matrices
$$
A=\left(\begin{array}{cc} A_{11} & A_{12} \\ A_{21} & A_{22} \end{array} \right)
$$ 
such that $A_{12}\in\b_p(\h_-,\h_+)$ and $A_{21}\in\b_p(\h_+,\h_-)$.

We endow $\a^p$ with the following norm 
\begin{equation}\label{norma}
\| A\|_{\infty,p}:=\|A_{11}\|+\|A_{22}\|+\|A_{12}\|_p+\|A_{21}\|_p.
\end{equation}

It is easy to see that $\a_p$ is a Banach space with this norm. Also it is clear that it is an algebra. This can be seen by elementary matrix computations, or noting that if $A, B\in\a^p$, then 
$$
[AB,E_+]=ABE_+-AE_+B+AE_+B-E_+AB=A[B,E_+]-[A,E_+]B\in\b_p(\h).
$$
With the norm $\| \, \, \|_{\infty,p}$ just defined, it is elementary that $\| AB\|_{\infty,p} \le \|  A\|_{\infty,p} \|  B\|_{\infty,p}$ for $A,B\in\a^p$, i.e., $\a^p$ is a Banach algebra. Note also that 
if $A\in\a^p$ then $A^*\in\a^p$ and $\|  A^*\|_{\infty,p}=\| A\|_{\infty,p}$. Is is also clear that $\|  \, \, \|_{\infty,p}$ is not a C$^*$-norm. However, the inclusion
$$
(\a^p,\|  \, \, \|_{\infty,p})\hookrightarrow (\b(\h),\|\, \|)
$$
is continuous, so that $\a^p$ is a Banach subalgebra of $\b(\h)$.

Let us denote by $\G_{\a^p}$ the group of invertible elements in $\a^p$. It is usually known in the literature \cite{segal} as one of the reduced groups (acting in the restricted Grassmannian, see for instance \cite{tumpach}). For instance, it is known that if $G\in\G_{\a^p}$ then  its diagonal entries (in the $\h=\h_+\oplus\h_-$ matrix) are $p$-Fredholm operators (i.e., operators which are invertible modulo the ideal $\b_p(\h)$). As such, to  $G\in\G_{\a^p}$ an index can be attached, namely, the Fredholm index of the $1,1$-entry. As a consequence  $\G_{\a^p}$ is disconnected, and its connected components are parametrized by this index.  

Let us denote by $\u_{\a^p}:=\{U\in \G_{\a^p}: U \hbox{ is unitary  in } \h\}$.

The following elementary property of $\G_{\a^p}$ shall be very useful, it states that the usual polar decomposition of (invertible) elements of $\a^p$ stays in $\a^p$

\begin{prop}\label{polar}
If $G\in\G_{\a^p}$ and $G=U|G|$ is the polar decomposition, then $|G|, U\in\G_{\a^p}$ (i.e., $U\in\u_{\a^p}$). Moreover, $G$ and $U$ share the same index.
\end{prop}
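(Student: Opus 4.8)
The plan is to reduce the whole statement to a single functional-calculus estimate: that passing to $\log$ (equivalently, to the square root) of a positive invertible element of $\a^p$ does not leave $\a^p$. Set $A:=G^*G$. Since $\a^p$ is a $*$-algebra and $G\in\G_{\a^p}$, we have $A\in\a^p$; moreover $A$ is positive and invertible in $\b(\h)$, so $\sigma(A)\subset[m,M]$ for some $0<m\le M$. The claim I would isolate first is that $\log A\in\a^p$, i.e. that $[\log A,E_+]\in\b_p(\h)$.

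To prove this I would use the integral representation
$$
\log A=\int_0^\infty\left(\frac{1}{1+\lambda}\,I-(A+\lambda)^{-1}\right)d\lambda,
$$
valid because $\sigma(A)\subset[m,M]$. The scalar term commutes with $E_+$, and from the identity $[(A+\lambda)^{-1},E_+]=-(A+\lambda)^{-1}[A,E_+](A+\lambda)^{-1}$ one obtains
$$
[\log A,E_+]=\int_0^\infty(A+\lambda)^{-1}[A,E_+](A+\lambda)^{-1}\,d\lambda.
$$
Now $[A,E_+]=[G^*G,E_+]\in\b_p(\h)$, and since $\b_p(\h)$ is an ideal the integrand lies in $\b_p(\h)$ with $\|(A+\lambda)^{-1}[A,E_+](A+\lambda)^{-1}\|_p\le(\lambda+m)^{-2}\|[A,E_+]\|_p$, using $\|(A+\lambda)^{-1}\|\le(\lambda+m)^{-1}$. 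As $\int_0^\infty(\lambda+m)^{-2}\,d\lambda=1/m<\infty$, the $\b_p$-valued integral converges absolutely, so $[\log A,E_+]\in\b_p(\h)$, that is $\log A\in\a^p$. (The square root itself can be treated identically, with $(A+\lambda)^{-1}$ weighted by $\lambda^{1/2}/\pi$.)

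From here the polar factors follow by elementary Banach-algebra bookkeeping. Put $X:=\frac{1}{2}\log A=\log|G|\in\a^p_h$. Because $\a^p$ is a unital Banach algebra, the series for $e^{X}$ and $e^{-X}$ converge in $\|\cdot\|_{\infty,p}$; hence $|G|=A^{1/2}=e^{X}\in\a^p$ and $|G|^{-1}=e^{-X}\in\a^p$, so $|G|\in\G_{\a^p}$. Then $U=G|G|^{-1}=Ge^{-X}\in\a^p$ as a product of elements of $\a^p$, and since $U$ is unitary, $U^{-1}=U^*\in\a^p$; therefore $U\in\G_{\a^p}$ and $U\in\u_{\a^p}$. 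For the index, consider the path $\gamma(t):=Ge^{-tX}$, $t\in[0,1]$, which is norm-continuous in $\a^p$, takes values in $\G_{\a^p}$, and joins $\gamma(0)=G$ to $\gamma(1)=G|G|^{-1}=U$; since the index is constant on the connected components of $\G_{\a^p}$, $G$ and $U$ carry the same index.

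The main obstacle is the functional-calculus step of the second paragraph: everything else is formal manipulation inside the Banach algebra, but one genuinely has to verify that $\log$ (or $\sqrt{\,\cdot\,}$) of a positive invertible element of $\a^p$ remains in $\a^p$, which is exactly what the integral representation together with the $\b_p$-ideal estimate achieves. A secondary point deserving a word is that the exponential series converges in $\|\cdot\|_{\infty,p}$ and that $t\mapsto e^{-tX}$ is continuous there, but both are immediate from submultiplicativity of the norm.
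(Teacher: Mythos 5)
Your proof is correct, but it takes a genuinely different route from the paper's. The paper argues abstractly: since $\a^p\subset\b(\h)$ is a Banach subalgebra, spectral permanence ($\sigma_{\b(\h)}(G^*G)\subset\sigma_{\a^p}(G^*G)$ and $\partial\sigma_{\a^p}(G^*G)\subset\partial\sigma_{\b(\h)}(G^*G)$, combined with the fact that the spectrum of a positive invertible operator is a subset of $(0,\infty)$ and hence equals its own boundary) gives $\sigma_{\a^p}(G^*G)=\sigma_{\b(\h)}(G^*G)\subset[\delta,\infty)$, after which $\log(G^*G)$, and hence $|G|=\exp(\tfrac12\log(G^*G))$, is produced by the holomorphic functional calculus inside the Banach algebra $\a^p$. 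You instead verify membership in $\a^p$ by hand: the resolvent commutator identity $[(A+\lambda)^{-1},E_+]=-(A+\lambda)^{-1}[A,E_+](A+\lambda)^{-1}$, the ideal property of $\b_p(\h)$, and the integrable bound $(\lambda+m)^{-2}$ show directly that $[\log A,E_+]\in\b_p(\h)$, which is exactly the defining condition for $\log A\in\a^p$. Your approach is more elementary --- it needs no spectral-permanence theorem and no holomorphic functional calculus in $\a^p$, only the resolvent identity and absolute convergence of a $\b_p$-valued integral --- and it yields the quantitative bound $\|[\log A,E_+]\|_p\le m^{-1}\|[A,E_+]\|_p$ as a bonus; on the other hand, it is tied to the concrete description of $\a^p$ as a commutator condition, whereas the paper's argument works verbatim in any Banach $*$-subalgebra of $\b(\h)$ (the authors note the manifold result holds for arbitrary $*$-Banach algebras, and spectral permanence is what makes that generalization painless). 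For the index claim the two arguments are the same fact seen two ways: the paper connects $G$ to $U$ through the convex set of positive invertibles (implicitly, $t\mapsto U\bigl((1-t)|G|+t\bigr)$), you use the exponential path $t\mapsto Ge^{-tX}$; both paths stay in $\G_{\a^p}$ and both conclude via local constancy of the index on $\G_{\a^p}$, which the paper records in the preliminaries.
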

\begin{proof}
It suffices to show that $|G|\in\G_{\a^p}$. Clearly $G^*G\in\G_{\a^p}$ Denote by $\sigma_{\b(\h)}(G^*G)$ and $\sigma_{\a^p}(G^*G)$ the spectra of $G^*G$ in $\b(\h)$ and $\a^p$, respectively. Since $\a^p\subset\b(\h)$ is a Banach subalgebra, its follows that (see for instance \cite{rickart})
$$
\sigma_{\b(\h)}(G^*G)\subset \sigma_{\a^p}(G^*G) \ \   \hbox{ and } \ \ \partial \sigma_{\a^p}(G^*G)\subset \partial  \sigma_{\b(\h)}(G^*G).
$$
On the other hand, since $G^*G$ is positive and invertible, $\sigma_{\b(\h)}(G^*G)\subset (0,+\infty)$. Then, $\sigma_{\b(\h)}(G^*G)=\partial \sigma_{\b(\h)}(G^*G)$. Thus $\sigma_{\b(\h)}(G^*G)=\sigma_{\a^p}(G^*G)=\sigma\subset [\delta,+\infty)$ for some $\delta>0$. Denote by $\log(z)$ the usual complex log function (discontinuous in the negative real axis). Note that $\log(z)$ is analytic on an open neighbourhood of $\sigma$, and thus $\log(G^*G)$ is defined in $\a^p$ by means of the usual holomorphic functional calculus in Banach algebras. Let $C=exp(\frac12  \log(G^*G))\in\a^p$. Note that if one regards $G^*G$ as an element in $\b(\h)$, $C\in\b(\h)$ is the usual (positive) square root of $G^*G$, i.e., $C=(G^*G)^{1/2}=|G|$.

The set of positive elements in $\G_{\a^p}$ is convex, and therefore connected. Thus $G$ and $U$ lie in the same connected component of $\G_{\a^p}$.
\end{proof}

\section{Regular structure of $\p^p$}
In this section we show that the set $\p^p$ of orthogonal projections in $\a^p$ is a   complemented $C^\infty$ submanifold of $\a^p_h$. It is known that the set of idempotents of a Banach algebra is a complemented submanifold of the algebra (see \cite{cprbanach} or \cite{zemanek}). Here we are dealing with selfadjoint idempotents. 

First, note that the group $\u_{\a^p}$ is a Banach-Lie group, whose Banach-Lie algebra is $\a^p_{ah}$: 
\begin{teo}
The group $\u_{\a^p}$ is a  Banach-Lie group and a complemented submanifold of $\a^p$. Its Banach-Lie algebra is $\a^p_{ah}$.
\end{teo}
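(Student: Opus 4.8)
The plan is to construct an analytic local chart at the identity that flattens $\u_{\a^p}$ onto a fixed closed subspace, and then to transport it to every point by left translation. First I would record the algebraic scaffolding. Since the involution is isometric and continuous for $\|\,\,\|_{\infty,p}$, the real-linear maps $A\mapsto\frac12(A+A^*)$ and $A\mapsto\frac12(A-A^*)$ are bounded complementary idempotents, so that
$$
\a^p=\a^p_h\oplus\a^p_{ah}
$$
is a topological direct sum of closed real subspaces. In particular $\a^p_{ah}$ is complemented, with complement $\a^p_h$; this will be the model space of the manifold. Moreover, if $X,Y\in\a^p_{ah}$ then $[X,Y]^*=[Y^*,X^*]=[Y,X]=-[X,Y]$, so $\a^p_{ah}$ is a closed real Lie subalgebra under the commutator, hence a Banach--Lie algebra.

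Next I would introduce the candidate chart. Let $\exp$ denote the analytic exponential of the Banach algebra $\a^p$ and consider
$$
\mu:\a^p_{ah}\oplus\a^p_h\to\G_{\a^p},\qquad \mu(X,Y)=\exp(X)\,\exp(\tfrac12 Y),
$$
which is analytic with values in the open set $\G_{\a^p}\subset\a^p$. Its differential at $(0,0)$ is $(X,Y)\mapsto X+\tfrac12 Y$, a topological isomorphism onto $\a^p$ by the splitting above. By the inverse function theorem in Banach spaces, $\mu$ restricts to an analytic diffeomorphism from a neighbourhood $W$ of $0$ onto a neighbourhood $V$ of $1$ in $\G_{\a^p}$; set $\psi=\mu^{-1}$. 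The key computation is that, writing $G=\mu(X,Y)$ and using $X^*=-X$, $Y^*=Y$, one gets $G^*=\exp(\tfrac12 Y)\exp(-X)$ and hence $G^*G=\exp(Y)$, entirely inside the functional calculus of $\a^p$. Thus $G$ is unitary iff $\exp(Y)=1$, and shrinking $W$ so that $\exp$ is injective there, this holds iff $Y=0$. When $Y=0$ we have $G=\exp(X)\in\G_{\a^p}$ with $G^*=\exp(-X)=G^{-1}$, so indeed $G\in\u_{\a^p}$. Therefore $\psi$ straightens $\u_{\a^p}\cap V$ onto $W\cap(\a^p_{ah}\oplus\{0\})$.

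To globalize, I would note that for any $U_0\in\u_{\a^p}$ the left multiplication $L_{U_0}\colon G\mapsto U_0G$ is an analytic diffeomorphism of $\G_{\a^p}$ mapping $\u_{\a^p}$ onto itself; composing $\psi$ with $L_{U_0}^{-1}$ gives a chart at $U_0$ with the same model space $\a^p_{ah}$ and complement $\a^p_h$. This exhibits $\u_{\a^p}$ as a complemented $C^\infty$ (in fact analytic) submanifold of $\a^p$. Since multiplication and inversion are restrictions of the analytic operations of the Banach--Lie group $\G_{\a^p}$, the manifold $\u_{\a^p}$ is a Banach--Lie group, and its Lie algebra is $T_1\u_{\a^p}=\a^p_{ah}$ with the commutator bracket.

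The step I expect to be the main obstacle is the control of the holomorphic functional calculus inside the Banach (non-$C^*$) algebra $\a^p$, precisely the difficulty flagged in the introduction. One must verify that $\exp$ is analytic and locally injective, that the logarithm and square-root type functions used implicitly stay in $\a^p$ and commute with the involution, so that $Y=\log(G^*G)$ is genuinely selfadjoint and the identity $G^*G=\exp(Y)$ is legitimate; this rests on the spectral-coincidence reasoning already exploited in Proposition \ref{polar}. The remaining points, namely that $d\mu_{(0,0)}$ is a topological isomorphism and that $\a^p_{ah}$ is complemented, reduce to the continuity and idempotency of the real splitting, which in turn use that $*$ is isometric for $\|\,\,\|_{\infty,p}$.
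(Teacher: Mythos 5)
Your proof is correct, but it takes a genuinely different route from the paper at the decisive step. The paper uses the one-variable logarithmic chart: it shows that a unitary $U$ with $\|U-1\|_{\infty,p}<r<2$ has $\sigma_{\b(\h)}(U)$ contained in an arc avoiding $-1$, invokes the boundary-of-spectrum argument of Proposition \ref{polar} to get $\sigma_{\a^p}(U)=\sigma_{\b(\h)}(U)$, and concludes that $\log(U)$, defined by holomorphic functional calculus in $\a^p$, lies in $\a^p_{ah}\cap\w$; the chart is then $\exp$ restricted to $\a^p_{ah}$, translated around the group. You instead build the two-variable, polar-decomposition-style chart $\mu(X,Y)=e^{X}e^{Y/2}$ and reduce unitarity to the vanishing of $Y$ via the purely algebraic identity $G^*G=e^{Y}$ together with local injectivity of $\exp$. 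What your route buys: it needs no spectral theory whatsoever --- only the bounded real splitting $\a^p=\a^p_h\oplus\a^p_{ah}$ (from isometry of the involution) and the inverse function theorem --- so it works verbatim in any Banach $*$-algebra with bounded involution, and it delivers the complemented-submanifold (flattening) chart and the Lie-group structure in a single stroke. What the paper's route buys: explicit exponential coordinates on $\u_{\a^p}$ itself, i.e.\ the statement that unitaries near $1$ are exactly the exponentials $e^{X}$, $X\in\a^p_{ah}$, which is the form used implicitly later. One remark: your final paragraph overstates what your argument requires. You never use $\log$, square roots, or the spectral-coincidence reasoning of Proposition \ref{polar}: in your construction $Y$ is \emph{defined} as a component of $\mu^{-1}(G)$ supplied by the inverse function theorem, and $G^*G=e^{Y}$ is an algebraic consequence of the definition of $\mu$ and of $\exp(Z)^*=\exp(Z^*)$; the only analytic inputs are analyticity of $\exp$ and its local injectivity at $0$, both immediate from the inverse function theorem. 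So that cautionary paragraph can simply be deleted, which makes the contrast with the paper's proof sharper.
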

\begin{proof}
The exponential map $\exp:\a^p\to\G_{\a^p}$, $exp(X)=e^X$, is a local diffeomorphism, there exists a  radius $0<r<1$ such and an open subset $0\in\w\subset\a^p$ such that 
$$
exp:\w\to\{G\in\a^p:\|G-1\|_{\infty,p}<r\}
$$
is a diffeomorphism. Its inverse is the usual $\log$ series. When restricted to $\a^p_{ah}$, it takes values in $\u_{\a^p}$, which is a complemented (real) subspace of $\a^p$. Thus, in order to obtain a local chart for $\u_{\a^p}$ around $1$ it suffices to show that elements in $\u_{\a^p}$ close enough to $1$, are of the form $e^X$ for some $X\in\a^p_{ah}$ close to $0$. In fact, if $U\in\u_{\a^p}$ satisfies  $\|U-1\|_{\infty,p}<r (<2)$,  since 
$$
\|U-1\|\le \|U-1\|_{\infty,p}<2,
$$
the spectrum $\sigma_{\b(\h)}(U)$ is contained in an arc $\{e^{i\theta}: |\theta|\le \theta_0<\pi\}$. Thus, by a similar argument as in Proposition \ref{polar}, 
$$
\sigma_{\a^p}(U)=\sigma_{\b(\h)}(U).
$$
It follows that $\log(U)\in\a^p_{ah}\cap\w$. One obtains local charts around other elements of $\u_{\a^p}$ translating this chart around $1$, by means of the left action of this group on itself.

That the group operations (multiplication and taking adjoint) are smooth is clear: these operations are smooth in the whole Banach algebra $\a^p$.  
\end{proof}

Note that if $P\in\p^p$, then $P^\perp:=1-P$ also belongs to $\p^p$. 
Let $P\in\p^p$ and $A\in\a^p$, denote by 
$$
{\bf S}_{P,A}=AP+(1-A)P^\perp\in\a^p .
$$
\begin{lem}
There exists an open neighbourhood $\w_P=\{A\in\a^p_h: \|A-P\|_{\infty,p}<r_P\}$ (for a given $r_P>0$) of $P$ in $\a^p$,  such that if $A\in\w_P$ then ${\bf S}_{P,A}\in \G_{\a^p}$.
\end{lem}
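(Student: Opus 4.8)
The plan is to exploit that ${\bf S}_{P,A}$ collapses to the identity when $A=P$, together with the openness of the group of units $\G_{\a^p}$ of the Banach algebra $\a^p$. First I would record that ${\bf S}_{P,P}=P\cdot P+(1-P)P^\perp=P+P^\perp=1$, which is invertible in $\a^p$. Since the map $A\mapsto {\bf S}_{P,A}$ is affine, indeed ${\bf S}_{P,A}=A(2P-1)+(1-P)$, it is continuous from $\a^p_h$ into $\a^p$; as $\G_{\a^p}$ is open (it is the unit group of a Banach algebra), a whole neighbourhood of $P$ must land in $\G_{\a^p}$. The remaining work is simply to pin down an explicit admissible radius $r_P$.

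The central computation is the identity ${\bf S}_{P,A}-1=(A-P)(2P-1)$. This follows by expanding $(A-P)(2P-1)=A(2P-1)-(2P^2-P)=A(2P-1)-P$, using $P^2=P$, and comparing with ${\bf S}_{P,A}=A(2P-1)+(1-P)$. Submultiplicativity of $\|\cdot\|_{\infty,p}$, established in the Preliminaries, then yields
$$
\|{\bf S}_{P,A}-1\|_{\infty,p}\le \|A-P\|_{\infty,p}\,\|2P-1\|_{\infty,p}.
$$
Because $P\in\a^p$, the symmetry $2P-1$ lies in $\a^p$ and has finite, nonzero norm, so I may take $r_P=\|2P-1\|_{\infty,p}^{-1}$. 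For any $A\in\w_P$ this forces $\|{\bf S}_{P,A}-1\|_{\infty,p}<1$, whence ${\bf S}_{P,A}=1-(1-{\bf S}_{P,A})$ is invertible in $\a^p$ through the Neumann series $\sum_{n\ge 0}(1-{\bf S}_{P,A})^n$, which converges in $\|\cdot\|_{\infty,p}$ precisely because $\a^p$ is a Banach algebra. Thus ${\bf S}_{P,A}\in\G_{\a^p}$.

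I do not expect a genuine obstacle; the two points deserving care are the algebraic identity for ${\bf S}_{P,A}-1$, where the idempotency $P^2=P$ is essential, and the fact that the Neumann series must be summed in the algebra norm $\|\cdot\|_{\infty,p}$ rather than the operator norm. The latter is exactly what guarantees that the inverse lies in $\a^p$, i.e.\ that ${\bf S}_{P,A}$ belongs to $\G_{\a^p}$ and not merely to the invertibles of $\b(\h)$, and it is legitimate because $(\a^p,\|\cdot\|_{\infty,p})$ was shown to be a Banach algebra. Note also that although the statement restricts $A$ to $\a^p_h$, the argument uses nothing beyond $A\in\a^p$; selfadjointness of $A$ is irrelevant for invertibility of ${\bf S}_{P,A}$ and only matters for the later manifold considerations.
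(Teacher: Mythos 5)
Your proposal is correct and follows the same route as the paper: the paper's proof simply observes that ${\bf S}_{P,P}=1$ and invokes continuity of $A\mapsto {\bf S}_{P,A}$ together with openness of $\G_{\a^p}$ in $\a^p$. Your argument merely makes this explicit, unpacking the openness via the identity ${\bf S}_{P,A}-1=(A-P)(2P-1)$, the submultiplicativity of $\|\cdot\|_{\infty,p}$, an explicit radius $r_P=\|2P-1\|_{\infty,p}^{-1}$, and the Neumann series in the Banach algebra $\a^p$ -- all of which is sound.
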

\begin{proof}
If $A=P$, then ${\bf S}_{P,P}=1$, so for $A\in \a^p_h$ close enough to $P$, ${\bf S}_{P,A}$ remains in $\G_{\a^p}$, which is open in $\a^p$.
\end{proof}
If $A=Q\in\p^p$, then ${\bf S}_{P,Q}$ is a standard element used to intertwine $P$ and $Q$: clearly
$$
{\bf S}_{P,Q}P=PQ=Q{\bf S}_{P,Q}.
$$
If, additionally, $Q$ belongs to $\w_P$, then ${\bf S}_{P,Q}$ is invertible and 
$$
Q={\bf  S}_{P,Q}P{\bf  S}^{-1}_{P,Q}.
$$
It is also a standard procedure to use ${\bf U}_{P,Q}$, the unitary part in the polar decomposition ${\bf S}_{P,Q}={\bf U}_{P,Q}|{\bf S}_{P,Q}|$, to obtain a unitary that intertwines
$$
Q={\bf U}_{P,Q}P{\bf U }^*_{P,Q}.
$$
Thus, a continuous map is defined
\begin{equation}\label{seccion local}
\mu_P:\p^p\cap\w_P\to \u_{\a^p} \ , \ \ \mu(Q)={\bf U}_{P,Q}.
\end{equation}
\begin{rem}
Let us denote by 
$$
\pi_P:\u_{\a^p}\to \p^p , \ \ \pi_P(U)=UPU^*.
$$
Clearly $\pi_P$ is a continuous map, whose image is the unitary orbit $\o_P=\{UPU^*:U\in\u_{\a^p}\}$ of $P$. The map $\mu_P$ of (\ref{seccion local}) is a continuous local cross section for $\pi_P$:
$$
\pi_P(\mu_P(Q))=Q, \hbox{ for } Q\in\p^p\cap\w_P.
$$
By translating this section using the left action of $\u_{\a^p}$ on itself, one obtains local cross sections on neighbourhoods of any point $P'$ in  $\p^p$.

By general topological considerations, this fact implies that the orbit $\o_P$ is open and closed in $\p^p$, and therefore a union of connected components.
This union is necessarily discrete, because as we have seen,
close projection belong to the same orbit.
\end{rem}

Let us show that $\p^p$ is a complemented submanifold of $\a^p_h$ (and since $\a^p$ is a (real) complemented subspace of $\a^p$, $\p^p$ is also a complemented submanifold of $\a^p$). Also, the same argument shows that the map $\pi_P:\u_{\a^p}\to \o_P$ is a submersion. To prove this fact we shall use the next general result, which can be found in \cite{rae}, and is a consequence of the implicit function theorem in Banach spaces.

\begin{lem}\label{raeburn}
Let $G$ be a Banach-Lie group acting smoothly on a Banach space $X$. For a fixed
$x_0\in X$, denote by $\pi_{x_0}:G\to X$ the smooth map $\pi_{x_0}(g)=g\cdot
x_0$. Suppose that
\begin{enumerate}
\item
$\pi_{x_0}$ is an open mapping,  regarded as a map from $G$ onto the orbit
$\{g\cdot x_0: g\in G\}$ of $x_0$ (with the relative topology of $X$).
\item
The differential $d(\pi_{x_0})_1:(TG)_1\to X$ splits: its null space and range are
closed complemented subspaces.
\end{enumerate}
Then the orbit $\{g\cdot x_0: g\in G\}$ is a smooth submanifold of $X$, and the
map
$$
\pi_{x_0}:G\to \{g\cdot x_0: g\in G\}
$$ 
is a smooth submersion.
\end{lem}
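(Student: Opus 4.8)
The plan is to manufacture an explicit submanifold chart for the orbit at $x_0$ out of the exponential map of $G$ and the splitting in hypothesis (2), and then to invoke the openness hypothesis (1) to certify that this chart genuinely describes the orbit (rather than a proper submanifold sitting inside it); everything is then transported to an arbitrary orbit point by the action, which acts by diffeomorphisms of $X$. Write $\pi:=\pi_{x_0}$, $\mathfrak{g}:=(TG)_1$, and let $\o$ denote the orbit $\{g\cdot x_0:g\in G\}$. By hypothesis (2) I fix topological direct sums
\[
\mathfrak{g}=\mathfrak{n}\oplus\mathfrak{m}, \qquad X=R\oplus S,
\]
where $\mathfrak{n}=\ker d\pi_1$, $R=\mathrm{ran}\, d\pi_1$, and $\mathfrak{m},S$ are closed complements with continuous projections; in particular $d\pi_1|_{\mathfrak{m}}:\mathfrak{m}\to R$ is a bounded bijection of Banach spaces, hence an isomorphism by the open mapping theorem. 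With $\exp:\mathfrak{g}\to G$ a local diffeomorphism at $0$ and $d(\pi\circ\exp)_0=d\pi_1$, I would first record that $\exp(\mathfrak{n})$ fixes $x_0$: for $N\in\mathfrak{n}$ the curve $c(t)=\exp(tN)\cdot x_0$ is the integral curve through $x_0$ of the fundamental vector field of $N$, and $c'(0)=d\pi_1(N)=0$ forces $x_0$ to be an equilibrium, so $c\equiv x_0$. Consequently, via the local diffeomorphism $(W,N)\mapsto\exp(W)\exp(N)$ of $\mathfrak{m}\times\mathfrak{n}$ onto a neighbourhood of $1$ (its differential at $0$ is the identity of $\mathfrak{g}=\mathfrak{m}\oplus\mathfrak{n}$), one has $\pi(\exp(W)\exp(N))=\exp(W)\cdot x_0$, so $\pi$ of this whole neighbourhood of $1$ is already the image of the transversal $\exp(\mathfrak{m})$ alone.

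Next I would produce the chart. Consider $F:\mathfrak{m}\times S\to X$, $F(W,s)=\exp(W)\cdot x_0+s$. Its differential at $(0,0)$ sends $(\dot W,\dot s)$ to $d\pi_1(\dot W)+\dot s$, an isomorphism of $\mathfrak{m}\oplus S$ onto $R\oplus S=X$ by the chosen splittings; the inverse function theorem then makes $F$ a diffeomorphism of a neighbourhood of $(0,0)$ onto a neighbourhood $V$ of $x_0$. In this chart the set $\Sigma:=\{\exp(W)\cdot x_0\}$ is the slice $\{s=0\}$, hence a $C^\infty$ submanifold of $X$ through $x_0$ with tangent space $R$, and clearly $\Sigma\subseteq\o$.

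The remaining and, I expect, the genuinely delicate step is to show that $\Sigma$ \emph{is} the orbit near $x_0$, for this is precisely where hypothesis (1) is indispensable. Let $\Omega$ be the open neighbourhood of $1$ afforded by the product chart $\exp(\mathfrak{m})\exp(\mathfrak{n})$, shrunk so that $\pi(\Omega)\subseteq V$; by the identity above $\pi(\Omega)\subseteq\Sigma$. Since $\pi$ is open onto $\o$, one has $\pi(\Omega)=\o\cap W$ for an open $W\subseteq X$, whence $\o\cap V'\subseteq\Sigma$ for $V'=W\cap V$; combined with $\Sigma\subseteq\o$ this yields $\o\cap V'=\Sigma\cap V'$, so $\o$ coincides with the submanifold $\Sigma$ near $x_0$. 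The openness cannot be dispensed with here: without it the orbit could be an immersed, non-locally-closed set tangent to $\Sigma$ at $x_0$ yet meeting every neighbourhood of $x_0$ outside $\Sigma$, and it is exactly the requirement that $\pi$ be an open (quotient-type) map onto $\o$ that excludes this pathology.

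Finally I would globalize and extract the submersion statement. For any $y_0=g_0\cdot x_0$, the map $L_{g_0}:x\mapsto g_0\cdot x$ is a diffeomorphism of $X$ taking $x_0$ to $y_0$ and $\o$ onto $\o$, so $L_{g_0}(\Sigma)$ serves as a submanifold chart for $\o$ near $y_0$; hence $\o$ is a $C^\infty$ submanifold of $X$. In the coordinates $(W,N)$ on $G$ and $F$ on $X$ the map $\pi$ reads $(W,N)\mapsto W$ followed by the inclusion $\Sigma\hookrightarrow X$, i.e. a projection onto the model space of $\o$; thus $d\pi_1$ is surjective onto $T_{x_0}\o=R$ with split kernel $\mathfrak{n}$, so $\pi$ is a submersion at $1$, and by the equivariance $\pi\circ L^G_{g_0}=L_{g_0}\circ\pi$ (with $L^G_{g_0}$ left translation in $G$) it is a submersion everywhere.
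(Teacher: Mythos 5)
Your proof is correct. Note that the paper itself gives no proof of this lemma: it is quoted from Raeburn \cite{rae}, with only the remark that it is a consequence of the implicit function theorem in Banach spaces, and your argument is exactly that standard proof --- split $\mathfrak{g}=\ker d\pi_1\oplus\mathfrak{m}$ and $X=R\oplus S$, show that $\exp(\ker d\pi_1)$ fixes $x_0$, build the chart $F(W,s)=\exp(W)\cdot x_0+s$ via the inverse function theorem so that the slice $\{s=0\}$ is a submanifold contained in the orbit, invoke the openness hypothesis to conclude that this slice is all of the orbit near $x_0$, and transport everything by the action, which acts by diffeomorphisms of $X$. Two minor points: the letter $W$ does double duty (elements of $\mathfrak{m}$ and the open set with $\pi(\Omega)=\mathcal{O}\cap W$), and your appeal to uniqueness of integral curves, while legitimate (smooth vector fields on Banach spaces are locally Lipschitz), can be avoided altogether by computing $c'(t)=d\left(L_{\exp(tN)}\right)_{x_0}\left(d\pi_1(N)\right)=0$ for $c(t)=\exp(tN)\cdot x_0$, so that $c$ is constant.
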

Here smooth means $C^\infty$.

\begin{teo}
$\p^p$ is a $C^\infty$ complemented submanifold of $\a^p_h$, and for any $P\in\p^p$ the map 
$$
\pi_P:\u_{\a^p}\to \o_P
$$
is a $C^\infty$ submersion.
\end{teo}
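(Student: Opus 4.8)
The plan is to verify the two hypotheses of Lemma~\ref{raeburn} for the smooth action of the Banach-Lie group $G=\u_{\a^p}$ (shown to be a Banach-Lie group with Lie algebra $\a^p_{ah}$ in the previous theorem) acting on the Banach space $X=\a^p_h$ by conjugation, $U\cdot A = UAU^*$, with base point $x_0=P$. The orbit of $P$ under this action is precisely $\o_P$, so Lemma~\ref{raeburn} will yield simultaneously that $\o_P$ is a $C^\infty$ submanifold of $\a^p_h$ and that $\pi_P$ is a submersion. Since we observed in the preceding Remark that each orbit $\o_P$ is open and closed in $\p^p$, the collection of orbits provides an atlas exhibiting all of $\p^p$ as a $C^\infty$ submanifold of $\a^p_h$.

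First I would check hypothesis (1), that $\pi_P$ is an open map onto its image $\o_P$. This is exactly what the local cross section $\mu_P$ of (\ref{seccion local}) provides: since $\pi_P\circ\mu_P=\mathrm{id}$ on $\p^p\cap\w_P$ and $\mu_P$ is continuous, $\pi_P$ admits continuous local sections around every point (translating by the left action as in the Remark), which forces openness of $\pi_P$ onto $\o_P$.

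Next I would compute the differential $d(\pi_P)_1:\a^p_{ah}\to\a^p_h$. Differentiating $t\mapsto e^{tX}Pe^{-tX}$ at $t=0$ gives $d(\pi_P)_1(X)=XP-PX=[X,P]$ for $X\in\a^p_{ah}$. I would then identify its null space and range and show both are closed and complemented. The kernel consists of those anti-Hermitian $X$ commuting with $P$, i.e.\ the block-diagonal anti-Hermitian elements with respect to the decomposition $\h=R(P)\oplus R(P^\perp)$; the range consists of the anti-Hermitian off-diagonal elements (those $Y\in\a^p_h$ with $PYP=P^\perp Y P^\perp=0$), since for off-diagonal $Z$ one solves $[X,P]=Z$ explicitly with $X=ZP-PZ\in\a^p_{ah}$. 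The natural complement of the range inside $\a^p_h$ is the space of block-diagonal Hermitian elements. The decomposition of $\a^p_h$ into diagonal and off-diagonal parts is the bounded projection $A\mapsto PAP+P^\perp A P^\perp$, which is readily checked to be continuous for the norm $\|\,\|_{\infty,p}$, so everything splits.

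The main obstacle I expect is checking that these projection maps genuinely respect the algebra $\a^p$ and its norm $\|\,\|_{\infty,p}$, rather than merely the operator topology. Conjugation by $P$ and the off-diagonal/diagonal splitting are written in terms of the fixed decomposition $\h=\h_+\oplus\h_-$ only through $P$ itself, so one must confirm that the relevant commutators keep the off-diagonal (with respect to $E_+$) entries in $\b_p$ with $p$-norm control; this follows because $P\in\a^p$ means $[P,E_+]\in\b_p(\h)$, and the algebra property of $\a^p$ together with the submultiplicativity of $\|\,\|_{\infty,p}$ propagates $p$-summability through the explicit formulas $X=ZP-PZ$. Once the splitting is verified to be bounded in the $\|\,\|_{\infty,p}$-topology, Lemma~\ref{raeburn} applies verbatim and completes the proof.
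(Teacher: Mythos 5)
Your proposal is correct, and it follows the paper's overall strategy (Lemma~\ref{raeburn} with $G=\u_{\a^p}$, $X=\a^p_h$, $x_0=P$, and openness via the local cross sections $\mu_P$), but it diverges genuinely at the key step, the splitting of $\Pi=(d\pi_P)_1$. The paper never computes $\Pi$ explicitly: it extends $\mu_P$ to the smooth map $\tilde{\mu}_P(A)={\bf S}_{P,A}|{\bf S}_{P,A}|^{-1}$ on the open set $\w_P\subset\a^p_h$, differentiates the identity $\pi_P\,\tilde{\mu}_P\,\pi_P=\pi_P$ to get $\Pi\Sigma\Pi=\Pi$ with $\Sigma=(d\tilde{\mu}_P)_P$, and then reads off that $\Pi\Sigma$ and $\Sigma\Pi$ are bounded idempotents with $R(\Pi\Sigma)=R(\Pi)$ and $N(\Sigma\Pi)=N(\Pi)$, so both split automatically. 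You instead compute $\Pi(X)=[X,P]$, identify $N(\Pi)$ as the $P$-diagonal anti-Hermitian part and $R(\Pi)$ as the $P$-codiagonal part of $\a^p_h$ (solved explicitly by $X=ZP-PZ$), and exhibit the bounded idempotent $A\mapsto PAP+P^\perp AP^\perp$, whose $\|\cdot\|_{\infty,p}$-boundedness follows from $P\in\a^p$ and submultiplicativity. Your route is more concrete and has the bonus of delivering the tangent space description $(T\p^p)_P=\{[X,P]:X\in\a^p\}$, which the paper states separately as a remark; the paper's route is more robust, since it works whenever a smooth local section exists, without needing to solve the equation $[X,P]=Z$ inside the algebra. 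One wording slip to fix: you call the range of $\Pi$ the ``anti-Hermitian off-diagonal elements,'' but the range lies in $\a^p_h$; your parenthetical characterization (the $Y\in\a^p_h$ with $PYP=P^\perp YP^\perp=0$) is the correct one, as $[X,P]$ is Hermitian and $P$-codiagonal when $X$ is anti-Hermitian.
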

\begin{proof}
Let us use Lemma \ref{raeburn} in our context, namely: $X=\a^p_h$, $G=\u_{\a^p}$ and $x_0=P$. Clearly, $\pi_P$ is an open mapping, because it has local continuous cross sections. Let us denote by $\Pi=(d\pi_P)_1:\a^p_{ah}\to \a^p_h$ and prove that it splits. The cross section $\mu_P$ can be extended to a map defined in $\w_P$ (which is open in $\a^p_h$): let 
$$
\tilde{\mu}_P:\w_P\to \u_{\a^p} , \tilde{\mu}_p(A)={\bf S}_{P,A}|{\bf S}_{P,A}|^{-1},
$$
i.e. $\tilde{\mu}_P(A)$ is the unitary part in the polar decomposition of the invertible element ${\bf S}_{P,A}$. Clearly $\tilde{\mu}_P$ is a $C^\infty$ extension of $\mu_P$, defined on an open set in $\a^p_h$. Let us denote by 
$\Sigma=(d\tilde{\mu}_P)_P$. Clearly $\Sigma:\a^p_H\to\a^p_{ah}$. Note that, since $\pi_P$ takes values in $\p^p$, on a neighbourhood of $1\in\u_{\a^p}$ one has
$$
\pi_P\ \tilde{\mu}_P\ \pi_P=\pi_P\ \mu_P\ \pi_p= \pi_P.
$$
Differentiating this identity at $1$ one gets
$$
\Pi\ \Sigma\ \Pi=\Pi \ \hbox{ in } \a^p_{ah}.
$$
This implies that both $\Pi\Sigma$ and $\Sigma\Pi$ are idempotent operators, acting  $\a^p_h$ and $\a^p_{ah}$, respectively.
Thus $R(\Pi\Sigma)\subset\a^p_{h}$ is complemented, and note that 
$$
R(\Pi\Sigma)\subset R(\Pi)=R(\Pi\Sigma\Pi)\subset R(\Pi\Sigma),
$$
i.e. $R(\Pi\Sigma)=R(\Pi)$. Similarly, $N(\Sigma\Pi)=N(\Pi)$ is complemented in $\a^p_{ah}$, i.e. $\Pi=(d\pi_P)_1$ splits.
\end{proof}

\begin{rem}
In particular, the tangent space $(T\p^p)_P$ at $P\in p^p$ is given by
$$
(T\p^p)_P=\{[X,P]: X\in\a^p\}.
$$
\end{rem}
\section{Spectral picture of projections in $\p^p$}
If $P\in\p^p$ has matrix (in terms of $\h=\h_+\oplus\h_-$)
$$
P=\left(\begin{array}{cc} x & a \\ a^* & y \end{array}\right),
$$
then the fact that $P^2=P\ge 0$ implies that $x,y\ge 0$,
 $x-x^2=aa^*$, $y-y^2=a^*a$, and $xa+ay=a$. 
Moreover, $a\in\b_p(\h_-,\h_+)$. Let us state the following elementary consequences of these relations:
\begin{lem}
With the above notations, one has that $\|a\|\le 1/2$, and the eigenvalues of $x$ and $y$ are of the form
$$
t^+=\frac12+ \sqrt{\frac14 -s^2} \ \hbox{ or } \  t^-=\frac12- \sqrt{\frac14 -s^2}
$$
where $s\le \frac12$ is a  a singular value of $a$. One or both $t_+,t_-$ may occur.
\end{lem}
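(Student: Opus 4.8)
The plan is to derive the whole statement from the two operator identities $aa^* = x - x^2$ and $a^*a = y - y^2$ already recorded, writing $f(t) = t(1-t)$ so that $f(x) = aa^*$ and $f(y) = a^*a$ via the functional calculus of the selfadjoint operators $x$ and $y$.

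First I would pin down the location of the spectra of $x$ and $y$. Since $P$ is an orthogonal projection we have $0 \le P \le 1$, and hence its diagonal compressions satisfy $0 \le x \le 1$ and $0 \le y \le 1$; in particular $\sigma(x), \sigma(y) \subseteq [0,1]$. The bound on $a$ is then immediate from the $C^*$-identity and the spectral mapping theorem: $\|a\|^2 = \|aa^*\| = \|f(x)\| = \sup_{t \in \sigma(x)} f(t) \le \max_{t \in [0,1]} t(1-t) = \tfrac14$, so $\|a\| \le \tfrac12$.

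For the eigenvalues the argument is direct. If $t$ is an eigenvalue of $x$ with unit eigenvector $\xi \in \h_+$, then $aa^*\xi = (x - x^2)\xi = (t - t^2)\xi$, so $\xi$ is an eigenvector of the positive operator $aa^*$ with eigenvalue $t - t^2 \ge 0$ (nonnegative because $t \in [0,1]$). The eigenvalues of $aa^*$ are exactly the squares $s^2$ of the singular values $s$ of $a$, whence $t - t^2 = s^2$; solving $t^2 - t + s^2 = 0$ gives precisely $t = t^\pm = \tfrac12 \pm \sqrt{\tfrac14 - s^2}$, the radical being real since $s \le \|a\| \le \tfrac12$. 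The mirror computation with $a^*a = y - y^2$ (whose nonzero eigenvalues coincide with those of $aa^*$) handles $y$. Note that $s = 0$ is admissible and reproduces the possible eigenvalues $t^+ = 1$ and $t^- = 0$, while $0 < s < \tfrac12$ gives a pair in $(0,1)$ and $s = \tfrac12$ forces $t^+ = t^- = \tfrac12$; this is exactly the content of ``one or both may occur'', since nothing in the relations selects a single root for a given $s$.

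The statement is elementary, so there is no serious obstacle; the only point needing a little care is to keep track that eigenvalues of $x$ lie in $[0,1]$ (so that $t - t^2 \ge 0$ and the radical is real, i.e. $s \le \tfrac12$), and that the eigenvalue $0$ of $aa^*$ is genuinely realized by the singular value $s = 0$, so that the extreme values $0$ and $1$ of $\sigma(x)$ and $\sigma(y)$ are also captured by the formula. If one wanted the sharper assertion that every point of $\sigma(x) \cap (0,1)$ is actually an eigenvalue, one would additionally invoke compactness of $aa^* = f(x)$: such a point maps under $f$ to a nonzero eigenvalue of the compact operator $aa^*$, has finite-dimensional spectral projection, and is isolated in $\sigma(x)$ because the singular values of $a$ accumulate only at $0$.
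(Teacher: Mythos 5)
Your proposal is correct and follows essentially the same route as the paper: the bound $\|a\|^2=\|aa^*\|=\|x-x^2\|\le\frac14$ via the spectral mapping theorem, and the eigenvalue formula obtained by solving $t-t^2=s^2$ after observing that an eigenvector of $x$ is an eigenvector of $aa^*$. Your closing remark on compactness of $aa^*$ (forcing spectral points of $x$ in $(0,1)$ to be genuine eigenvalues) is precisely the observation the paper also makes, so nothing is missing.
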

\begin{proof}
Clearly $\|x\|\le 1$ and $\|y\|\le 1$. Then
$$
\|a\|^2=\|aa^*\|=\|x-x^*\|=\sup\{t-t^2: t\in\sigma(x)\}\le \sup\{t-t^2: t\in[0,1]\}=\frac14.
$$ 
Again, $x-x^2=aa^*$, and the fact that $aa^*$ is compact,  imply that the elements $t$  in the  spectrum of $x$ which are neither $0$ nor $1$ (which correspond to the spectral value $0$ for $aa^*$) are (finite or countable many) eigenvalues.  Moreover, if $t\ne 0,1$ is an eigenvalue of $x$, then
$$
t-t^2=s^2,
$$
for $s$ a singular value of $a$. Then either $t=t^+=\frac12+ \sqrt{\frac14 -s^2}$ or $t=t^-=\frac12- \sqrt{\frac14 -s^2}$. The same facts hold for $y$.
\end{proof}
Note that the biggest singular value $s=\frac12$ of $a$ corresponds to $t^+=t^-=\frac12$. 

The next result, which was proven for compact commutators in
\cite{conmcomp},
holds also in this context, and clarifies the relation between the (multiplicities of) the eigenvalues of $x$ and $y$. We include the proof because it is elementary and straightforward.
\begin{lem}\label{simetriaespectros}
If $\lambda \ne 0,1$ is an eigenvalue of $y$, then $1-\lambda$ is an eigenvalue of $x$, and the operator 
$a|_{N(y-\lambda 1_{\h_-})}$ maps $N(y-\lambda 1_{\h_-})$ isomorphically onto $N(x-(1-\lambda) 1_{\h_+})$. Thus in particular, these eigenvalues have the same multiplicity. Moreover,
$$
aP_{N(y-\lambda 1_{\h_-})}=P_{N(x-(1-\lambda) 1_{\h_-})}a.
$$
\end{lem}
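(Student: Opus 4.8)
The whole argument rests on the single relation $xa+ay=a$ established just above, which I would rewrite as $xa=a(1-y)$, exhibiting $a$ as an intertwiner between $1-y$ acting on $\h_-$ and $x$ acting on $\h_+$. Taking adjoints (recall $x=x^*$, $y=y^*$) produces the companion relation $ya^*=a^*(1-x)$. These two identities, together with $aa^*=x-x^2$ and $a^*a=y-y^2$, are all that is needed.

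First I would set up the eigenvector correspondence. Fix an eigenvalue $\lambda\ne 0,1$ of $y$ and a vector $v\in N(y-\lambda 1_{\h_-})$. Since the eigenvalues of $y$ lie in $[0,1]$ by the previous lemma, $\lambda\in(0,1)$, so $\lambda(1-\lambda)>0$. Applying $xa=a(1-y)$ to $v$ gives $x(av)=a(1-\lambda)v=(1-\lambda)(av)$, so $av\in N(x-(1-\lambda)1_{\h_+})$ as soon as $av\ne 0$. Non-vanishing is immediate from $a^*a=y-y^2$: on $N(y-\lambda 1_{\h_-})$ this operator acts as $\lambda(1-\lambda)\,\mathrm{Id}$, whence $a^*av=\lambda(1-\lambda)v\ne 0$ forces $av\ne 0$, and the same computation shows $a|_{N(y-\lambda 1_{\h_-})}$ is bounded below, hence injective. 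Symmetrically, using $ya^*=a^*(1-x)$ and $aa^*=x-x^2$, the operator $a^*$ carries $N(x-(1-\lambda)1_{\h_+})$ injectively into $N(y-\lambda 1_{\h_-})$; in particular $1-\lambda$ really is an eigenvalue of $x$.

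For the isomorphism claim I would observe that the two maps are mutually inverse up to the scalar $\lambda(1-\lambda)$: for $w\in N(x-(1-\lambda)1_{\h_+})$ the computation $a\big(\tfrac{1}{\lambda(1-\lambda)}a^*w\big)=\tfrac{1}{\lambda(1-\lambda)}(aa^*)w=\tfrac{1}{\lambda(1-\lambda)}(1-\lambda)\lambda\,w=w$ shows $a|_{N(y-\lambda 1_{\h_-})}$ is onto $N(x-(1-\lambda)1_{\h_+})$ with bounded inverse $\tfrac{1}{\lambda(1-\lambda)}a^*$. Thus $a$ restricts to a topological isomorphism of the two eigenspaces, which forces them to have the same dimension, i.e.\ the same multiplicity.

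Finally, for the intertwining of the orthogonal spectral projections $E:=P_{N(y-\lambda 1_{\h_-})}$ and $F:=P_{N(x-(1-\lambda)1_{\h_+})}$, the cleanest route avoids functional calculus altogether. I already have $a\big(N(y-\lambda 1_{\h_-})\big)\subseteq N(x-(1-\lambda)1_{\h_+})$ and $a^*\big(N(x-(1-\lambda)1_{\h_+})\big)\subseteq N(y-\lambda 1_{\h_-})$. The second inclusion shows $a$ maps the orthogonal complement $N(y-\lambda 1_{\h_-})^\perp$ into $N(x-(1-\lambda)1_{\h_+})^\perp$: for $v\perp N(y-\lambda 1_{\h_-})$ and any $w\in N(x-(1-\lambda)1_{\h_+})$ one has $\langle av,w\rangle=\langle v,a^*w\rangle=0$, since $a^*w\in N(y-\lambda 1_{\h_-})$. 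Decomposing an arbitrary $v\in\h_-$ as $Ev+(1-E)v$ then gives $Fav=F(aEv)+F(a(1-E)v)=aEv+0$, because $aEv$ is fixed by $F$ and $a(1-E)v$ is killed by $F$; as $v$ is arbitrary, $Fa=aE$, the asserted identity. I do not expect a serious obstacle: every step reduces to the algebraic relations among $x,y,a$. The only points needing mild care are confirming $\lambda(1-\lambda)\ne 0$ (guaranteed by $\lambda\in(0,1)$) so the inverse map is legitimate, and using that $E,F$ are \emph{orthogonal} projections so the complement argument applies. Should one prefer, the projection identity could instead be read off from the resolvent form $(z-x)^{-1}a=a(z-(1-y))^{-1}$ integrated over a small circle around $1-\lambda$, but the complement argument is shorter.
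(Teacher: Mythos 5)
Your proof is correct, and for the second half of the lemma it takes a genuinely different (and cleaner) route than the paper. The eigenvector correspondence itself is forced and identical in both arguments: from $xa=a(1-y)$ you get $x(av)=(1-\lambda)av$, just as the paper computes $xa\xi=(1-\lambda)a\xi$ from $a=xa+ay$; your non-vanishing argument via $a^*a v=\lambda(1-\lambda)v$ is interchangeable with the paper's observation that $N(a)=N(y)\oplus N(y-1_{\h_-})$. The differences come after that. For the multiplicity claim, the paper only proves injectivity in both directions (of $a$ on $N(y-\lambda 1_{\h_-})$ and of $a^*$ on $N(x-(1-\lambda)1_{\h_+})$) and concludes equality of dimensions, whereas you exhibit the explicit two-sided inverse $\tfrac{1}{\lambda(1-\lambda)}a^*$; this gives surjectivity directly and so actually establishes the ``isomorphically onto'' wording of the statement more completely than the paper's own argument does. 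For the projection identity, the paper expands the component $\xi_2\perp N(y-\lambda 1_{\h_-})$ in eigenvectors of $y$ (using the spectral decomposition coming from compactness of $a^*a$) and checks that each image $a\eta_l$ is an eigenvector of $x$ for an eigenvalue different from $1-\lambda$; your argument replaces all of this with the one-line duality observation that $a^*\bigl(N(x-(1-\lambda)1_{\h_+})\bigr)\subseteq N(y-\lambda 1_{\h_-})$ forces $a\bigl(N(y-\lambda 1_{\h_-})^\perp\bigr)\subseteq N(x-(1-\lambda)1_{\h_+})^\perp$, after which $aE=Fa$ follows by decomposing $v=Ev+(1-E)v$. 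This is more elementary (no spectral theorem, no infinite sums to worry about) and equally rigorous; what the paper's route buys instead is consistency with the explicit spectral picture of Theorem \ref{forma general}, which the surrounding section develops and reuses.
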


\begin{proof} 
Let $\xi\in\h$, $\xi\ne 0$, such that $y\xi=\lambda \xi$ (with $\lambda \ne 0,1$). Then,  using the relation $a=xa+ay$,  one has
$$
a\xi=xa\xi+ay\xi=xa\xi+\lambda a\xi, \ \hbox{ i.e. } xa\xi=(1-\lambda)a\xi.
$$
Also note that
$$
N(a)=N(a^*a)=N(y-y^2)=N(y)\oplus N(y-1_{\h_-}),
$$ 
and thus $a\xi\ne 0$ is an eigenvector for $x$, with eigenvalue $1-\lambda$, and the map $a|_{N(y-\lambda 1_{\h_-})}$ is injective from $N(y-\lambda 1_{\h_-})$ to $N(x-(1-\lambda) 1_{\h_+})$. Therefore 
$$
\dim(N(y-\lambda 1_{\h_-}))\le \dim(N(x-(1-\lambda) 1_{\h_+}).
$$
By a symmetric argument, using $a^*$ (and the relation $ya^*+a^*x=a^*$), one obtains equality.

Pick now an arbitrary $\xi\in\h_-$, $\xi=\xi_1+\xi_2$, with $\xi_1\in\ N(y-\lambda 1_{\h_-})$ and $\xi_2\perp N(y-\lambda 1_{\h_-})$.
Then 
$$
aP_{N(y-\lambda 1_{\h_-})}\xi=a\xi_1.
$$
On the other hand
$$
P_{N(x-(1-\lambda) 1_{\h_+})}a\xi_1=a\xi_1,
$$
by the fact proven above. Let us see that $P_{N(x-(1-\lambda)1_{\h_+})}a\xi_2=0$, which would prove our claim. Since $\xi_2\perp N(y-\lambda 1_{\h_-})$, $\xi_2=\sum_{l\ge 2}\eta_l+\eta_0+\eta_1$, where $\eta_l$, $l\ge 2$, are eigenvectors of $y$ corresponding to eigenvalues $\lambda_l$ different from $0$, $1$ and $\lambda$, $\eta_0\in N(y)$, $\eta_1\in N(y-1_{\h_-})$ (where these two latter may be trivial).
Note  then that  $\eta_0,\eta_1\in N(a)$, and thus
$$
a\xi_2=\sum_{l\ge 2}a\eta_l,
$$
where the (non nil) vectors $a\eta_l$ are eigenvectors of $x$ corresponding to eigenvalues $1-\lambda_l$, different from $0,1$ and $1-\lambda$.   Thus $P_{N(x-(1-\lambda) 1_{\h_+})}a\xi_2=0$.
\end{proof}
\begin{rem}

\noindent

\begin{enumerate}
\item
In the notation above, this  Lemma says that to $t^+$ of $x$ corresponds $t^-=1-t^+$ of $y$, and vice versa, with the same multiplicity.
\item
If $a$ has infinite rank,  $s=s_n$ form a sequence in $\ell^p$. If there are infinitely many $t^-$, then they form a sequence in $\ell^{p/2}$. If there are infinitely many $t^+$, they form a sequence $t^+_n$ such that $1-t^+_n$ belongs to $\ell^{p/2}$. Indeed, note that near the origin,   $f(s)=\frac12-\sqrt{\frac14-s^2}=s^2+o(s^4)$ 
\end{enumerate}
\end{rem}
We will characterize the connected components of $\p^p$. To this effect, it shall be useful and clarifying to consider the $*$-homomorphism
$$
\pi:\b(\h)\to\b(\h)/\k(\h)
$$
onto the Calkin algebra $\b(\h)/\k(\h)$. Note that $\pi(E_+), \pi(E_-)$ are non trivial projections with $\pi(E_+)+ \pi(E_-)=1$. Thus, elements in $\b(\h)/\k(\h)$ can be written as $2\times 2$  matrices in terms of this sum. Let us write
$$
\pi(E_+)=\left( \begin{array}{cc} 1 & 0 \\ 0 & 0 \end{array} \right) \ \hbox{ and } \  \pi(E_-)=\left( \begin{array}{cc} 0 & 0 \\ 0 & 1 \end{array} \right).
$$
Then, if $P\in\p^p$, it follows that 
$$
\pi(P)=\left( \begin{array}{cc} e & 0 \\ 0 & f \end{array} \right),
$$
where $e,f$ are projections in $\b(\h)/\k(\h)$ with $e\le \pi(E_+)$ and $f\le \pi(E_-)$.
\begin{lem}\label{limitante}
Let $P,Q\in\p^p$ in the same connected component, say
$$
\pi(P)=\left( \begin{array}{cc} e & 0 \\ 0 & f \end{array} \right)  \ \hbox{ and } \ 
\pi(Q)=\left( \begin{array}{cc} e' & 0 \\ 0 & f' \end{array} \right).
$$
Then there exists a curve $\gamma(t)=\left( \begin{array}{cc} e(t) & 0 \\ 0 & f(t) \end{array} \right)$
of projections in $\b(\h)/\k(\h)$ such that $\gamma(0)=\pi(P)$ and $\gamma(1)=\pi(Q)$.
\end{lem}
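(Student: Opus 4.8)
The plan is simply to push a path in $\p^p$ forward to the Calkin algebra and check that it lands in the prescribed set of block-diagonal projections. Since $P$ and $Q$ lie in the same connected component of $\p^p$, and since $\p^p$ is a $C^\infty$ Banach manifold (being locally modeled on open subsets of the Banach space $\a^p_h$, it is locally path-connected, so its connected components coincide with its path-components), there is a continuous path $t\mapsto P(t)$ in $(\p^p,\|\,\,\|_{\infty,p})$ with $P(0)=P$ and $P(1)=Q$.

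Next I would transport this path to the Calkin algebra. Because the inclusion $(\a^p,\|\,\,\|_{\infty,p})\hookrightarrow(\b(\h),\|\,\|)$ is continuous, the path $P(t)$ is also continuous in the operator norm. The quotient map $\pi:\b(\h)\to\b(\h)/\k(\h)$ is a contractive $*$-homomorphism, hence norm-continuous, so $\gamma(t):=\pi(P(t))$ is a continuous curve in $\b(\h)/\k(\h)$ with $\gamma(0)=\pi(P)$ and $\gamma(1)=\pi(Q)$.

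It then remains to verify that each $\gamma(t)$ is a projection of the required form. Since $\pi$ is a $*$-homomorphism and $P(t)^2=P(t)^*=P(t)$, each $\gamma(t)$ is a selfadjoint idempotent in $\b(\h)/\k(\h)$. Moreover $[P(t),E_+]\in\b_p(\h)\subset\k(\h)$ for every $t$, so $\pi([P(t),E_+])=0$, i.e. $\gamma(t)$ commutes with $\pi(E_+)$; a projection commuting with $\pi(E_+)$ is necessarily block-diagonal, $\gamma(t)=\begin{pmatrix} e(t) & 0 \\ 0 & f(t)\end{pmatrix}$, with $e(t)=\pi(E_+)\gamma(t)\pi(E_+)\le\pi(E_+)$ and $f(t)=\pi(E_-)\gamma(t)\pi(E_-)\le\pi(E_-)$ projections. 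The entries $e(t)$ and $f(t)$ depend continuously on $t$ because compression by the fixed projections $\pi(E_\pm)$ is a bounded, hence continuous, linear map.

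The argument is a soft push-forward, so I do not expect a genuine obstacle in the computations. The only point deserving care is the first step: extracting an honest continuous path from the mere hypothesis of lying in the same connected component. This is precisely where I invoke the manifold (hence local path-connectedness) structure of $\p^p$ established in the previous section, which guarantees that components and path-components agree.
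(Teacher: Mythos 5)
Your proof is correct and follows exactly the paper's own argument: take a continuous path in $\p^p$ (using that components of a manifold are path-components), note it is norm-continuous by the continuity of the inclusion $\a^p\hookrightarrow\b(\h)$, and push it forward by $\pi$, where diagonality follows since $[P(t),E_+]\in\b_p(\h)\subset\k(\h)$. You merely spell out the steps the paper labels as ``clearly,'' so there is nothing to add.
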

\begin{proof}
Let $P(t)$ be a continuous path in $\p^p$ with $P(0)=P$ and $P(1)=Q$. Note that in particular $P(t)$ is continuous in the norm topology of $\b(\h)$, and clearly, $\pi(P(t))$ has diagonal matrix with respect to $\pi(E_+)+\pi(E_-)=1$.
\end{proof}
Recall that there are three classes of projections in $\b(\h)/\k(\h)$ modulo unitary equivalence: $0$, $1$ and $p\ne 0,1$. Also, two projections are connected by a continuous path of projections if and only if they are unitarily equivalent.

\begin{rem}\label{tipos} 
The projections in $\p^p$ can be classified in the following nine types:
\begin{enumerate}
\item
$P$ belongs to $\mathbb{D}_1$ if $\pi(P)=\left( \begin{array}{cc} 0 & 0 \\ 0 & 0 \end{array} \right);$
\item
$P$ belongs to $\mathbb{D}_2$ if $\pi(P)=\left( \begin{array}{cc} 1 & 0 \\ 0 & 1 \end{array} \right);$
\item
$P$ belongs to $\mathbb{D}_3$ if $\pi(P)=\left( \begin{array}{cc} 1 & 0 \\ 0 & 0 \end{array} \right);$
\item
$P$ belongs to $\mathbb{D}_4$ if $\pi(P)=\left( \begin{array}{cc} 0 & 0 \\ 0 & 1 \end{array} \right);$
\item
$P$ belongs to $\mathbb{E}_1$ if $\pi(P)=\left( \begin{array}{cc} e & 0 \\ 0 & 0 \end{array} \right),$
where $e\ne 0,1$;
\item
$P$ belongs to $\mathbb{E}_2$ if $\pi(P)=\left( \begin{array}{cc} 0 & 0 \\ 0 & f \end{array} \right),$
where $f\ne 0,1$;
\item
$P$ belongs to $\mathbb{E}_3$ if $\pi(P)=\left( \begin{array}{cc} e & 0 \\ 0 & 1 \end{array} \right)$
where $e\ne 0,1$;
\item
$P$ belongs to $\mathbb{E}_4$ if $\pi(P)=\left( \begin{array}{cc} 1 & 0 \\ 0 & f \end{array} \right);$
where $f\ne 0,1$; and
\item
$P$ belongs to $\mathbb{E}_5$ if $\pi(P)=\left( \begin{array}{cc} e & 0 \\ 0 & f \end{array} \right);$
where $e, f\ne 0,1$.
\end{enumerate}
\end{rem}

We call the classes $\mathbb{D}_i$ {\it discrete} and $\mathbb{E}_j$ {\it essential}. 

Summarizing, one has the following expression for an arbitrary projection in $\p^p$. We make  here a  slight change  of notation. Without loss of generality, we assume that $\h=\ele\times\ele$, $\h_+=\ele\times 0$ and $\h_-=0\times\ele$
\begin{teo}\label{forma general}
If $P\in\p^p$, then
$$ P=
\left(\begin{array}{ccc}
\d\sum_n \alpha_n  P_n+ \d\sum_m \beta_m Q_m+E_1
& | &
\d\sum_k \lambda_k \xi_k\otimes \xi'_k + \d\sum_l \mu_l\eta_l\otimes \eta'_l \\ 
 ---------- & &  ----------\\
\d\sum_k \lambda_k \xi'_k\otimes \xi_k + \d\sum_l \mu_l\eta'_l\otimes \eta_l
& | & 
\d\sum_n (1-\alpha_n)P'_n + \d\sum_m(1-\beta_m)Q'_m +E'_1
\end{array}\right),$$
where
\begin{itemize}
\item
The spectrum of $x$ (in $\b(\ele)$) consists of two strictly monotone (eventually finite) sequences $\alpha_n, \beta_m$, such that $\frac12>\alpha_n\to 0$, $\frac12\le \beta_m\to 1$, plus, eventually, $0$ and $1$, which may or may not be eigenvalues. The spectrum of $y$ consists of $1-\alpha_n$, $1-\beta_m$  and eventually $0$ and $1$  (with similar considerations).
\item
$r(P_n)=r(P'_n)$, $r(Q_m)=r(Q'_m)$. These multiplicities are finite.
\item
$r(P_m)\alpha_m$ and $1-r(Q_m)\beta_m$ belong to $\ell^{p/2}$; $\lambda_k=\sqrt{\alpha_k-\alpha_k^2}$ and $\mu_l=\sqrt{\beta_l-\beta_l^2}$.
\item
$E_1$ and $E'_1$ denote the spectral projections of $x$ and $y$, respectively, corresponding to the spectral value $1$. They can be nil, finite or infinite, and are unrelated.
\item
$$
\{\xi_k:k\ge 1\}, \{\xi'_k:k\ge 1\}, \{\eta_l:l\ge 1\}  \hbox{ and } \{\eta'_l:l\ge 1\}
$$ 
are orthonormal systems which span, respectively
$$
\oplus_{n\ge 1} R(P_n), \oplus_{n\ge 1} R(P'_n), \oplus_{m\ge 1} R(Q_m)
 \hbox{ and } \oplus_{m\ge 1} R(Q'_m),
$$
and consists of eigenvectors of $x$ and $y$ in the following manner:
$$
x\xi_k=\alpha_{n(k)}\xi_k, \  x\eta_l=\beta_{m(l)}\eta_l, \  y\xi'_k=(1-\alpha_{n(k)})\xi'_k  \ \hbox{ and }  \ y\eta'_l=(1-\beta_{m(l)})\eta_l'.
$$
\end{itemize}
\end{teo}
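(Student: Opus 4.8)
The plan is to read the whole decomposition off the spectral theorem applied to the two diagonal corners $x$ and $y$, using the relations $x-x^2=aa^*$, $y-y^2=a^*a$ and $xa+ay=a$ established above, together with Lemma \ref{simetriaespectros}. First I would analyse $x$. Since $x\ge 0$ with $\|x\|\le 1$ and $aa^*=x-x^2\in\b_{p/2}(\h_+)$ is compact, the part of $\sigma(x)$ lying in $(0,1)$ consists, as already observed, of isolated eigenvalues of finite multiplicity whose only possible accumulation points are $0$ and $1$. I would split these eigenvalues into those in $(0,\tfrac12)$, arranged as a strictly decreasing sequence $\alpha_n\to 0$, and those in $[\tfrac12,1)$, arranged as a strictly increasing sequence $\beta_m\to 1$; let $P_n,Q_m$ be the corresponding finite rank spectral projections, and let $E_1$ (resp.\ $E_0$) be the spectral projection of $x$ for the eigenvalue $1$ (resp.\ $0$), if present. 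The spectral theorem then gives the $(1,1)$ entry $x=\sum_n\alpha_nP_n+\sum_m\beta_mQ_m+E_1$, since the $0$-eigenspace contributes nothing.

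Next I would transport this to $y$. Rewriting $xa+ay=a$ as $xa=a(1-y)$ and $ya^*=a^*(1-x)$, the operator $a$ intertwines $1-y$ with $x$; combined with Lemma \ref{simetriaespectros} this shows that the nonzero, non-unit spectrum of $y$ is exactly $\{1-\alpha_n\}\cup\{1-\beta_m\}$, with spectral projections $P'_n,Q'_m$ satisfying $r(P'_n)=r(P_n)$ and $r(Q'_m)=r(Q_m)$, and with $E'_1$ the spectral projection of $y$ for the eigenvalue $1$. This yields the $(2,2)$ entry; note that $E_1$ and $E'_1$ are governed by $N(a^*)$ and $N(a)$ respectively and are therefore unrelated.

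To obtain the off-diagonal entry I would build a singular value decomposition of $a$ adapted to these eigenspaces. For each $n$ choose an orthonormal basis $\{\xi_k:n(k)=n\}$ of $R(P_n)$, set $\lambda_k=\sqrt{\alpha_{n(k)}-\alpha_{n(k)}^2}$ and $\xi'_k=\lambda_k^{-1}a^*\xi_k$; the identities $aa^*=x-x^2$ and $ya^*=a^*(1-x)$ make $\{\xi'_k\}$ an orthonormal basis of $\oplus_n R(P'_n)$ consisting of $y$-eigenvectors, with $a\xi'_k=\lambda_k\xi_k$. Doing the same on the $R(Q_m)$ with $\mu_l=\sqrt{\beta_{m(l)}-\beta_{m(l)}^2}$ produces $\{\eta_l\},\{\eta'_l\}$, and since $N(a)=N(y)\oplus N(y-1_{\h_-})$ accounts for the remaining part of $\h_-$, one gets $a=\sum_k\lambda_k\,\xi_k\otimes\xi'_k+\sum_l\mu_l\,\eta_l\otimes\eta'_l$ and the stated expression for $a^*$. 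Finally, the summability assertions follow from $a\in\b_p$: the singular values $(\lambda_k)\cup(\mu_l)$ lie in $\ell^p$, so $(\lambda_k^2)\cup(\mu_l^2)\in\ell^{p/2}$, and the asymptotics $f(s)=\tfrac12-\sqrt{\tfrac14-s^2}=s^2+o(s^4)$ recorded above translate this into $\{\alpha_{n(k)}\},\{1-\beta_{m(l)}\}\in\ell^{p/2}$, i.e.\ the $\ell^{p/2}$ conditions with the indicated multiplicities.

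The genuinely new content here is modest, since the spectral discreteness of $x$ in $(0,1)$ and the $x$--$y$ symmetry are already in hand; the main point requiring care is the bookkeeping in the third step, namely verifying that the orthonormal singular systems can be chosen simultaneously as eigenvectors of $x$ and of $y$, and that the block-diagonal structure of $a$ with respect to the matched eigenspaces exhausts all of $a$. I expect the only real subtlety to be the boundary value $s=\tfrac12$, where $t^+=t^-=\tfrac12$ and the two branches meet, which is handled by placing it in the $\beta_m$ family, together with the careful separation of the (unrelated) kernels $N(a)$ and $N(a^*)$ from the parts of $\h_+$ and $\h_-$ carrying the nonzero singular values.
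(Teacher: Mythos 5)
Your proposal is correct and takes essentially the same route as the paper: there the theorem carries no separate proof at all (it is introduced with ``Summarizing\dots'') and is meant to be exactly the assembly of the preceding facts --- the lemma giving the eigenvalues $t^{\pm}=\frac12\pm\sqrt{\frac14-s^2}$ of $x$ and $y$, Lemma \ref{simetriaespectros} matching eigenspaces via $a$, and the remark on the $\ell^{p/2}$ asymptotics --- which is precisely what you carry out. Your only added content, the verification that $\xi'_k=\lambda_k^{-1}a^*\xi_k$ yields an orthonormal system of $y$-eigenvectors realizing the singular value decomposition of $a$, is correct and is the bookkeeping the paper leaves implicit.
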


\section{Halmos decomposition}
Given two projections, in this case $P$ and $E_+$, the space $\h$ can be decomposed in $5$ orthogonal subspaces which reduce $P$ and $E_+$, namely
$$
\h=\left(R(P)\cap\h_+\right)\oplus \left(N(P)\cap\h_-\right)\oplus\left(R(P)\cap\h_-\right)\left(N(P)\cap\h_+\right)\oplus \h_0,
$$
where $\h_0$, the orthogonal complement of the sum of the first $4$, is usually called the {\it generic part} of $P$ and $E_+$. In \cite{halmos}, Halmos proved that there is a unitary isomorphism between $\h_0$ and a product space $\ele\times\ele$, and a positive operator $\Gamma$ with trivial null space and $\|\Gamma\|\le \pi/2$,  acting in $\ele$, such that the reductions  $E^0_+$ and $P_0$ of   $E_+$ and $P$ to $\h_0$  are unitarily equivalent to (respectively)
$$
\left( \begin{array}{cc} 1 & 0 \\ 0 & 0 \end{array}\right) \ \hbox{ and } \ \left( \begin{array}{cc} C^2 & CS \\ CS & S^2 \end{array}\right),
$$
where $C=\cos(\Gamma)$ and $S=\sin(\Gamma)$. It can be shown that $P_0$ and $E_+^0$ are unitarily equivalent:
$$
e^{iX}E_+^0e^{-iX}=P_0,
$$
where $X=X_{E_+^0,P_0}=\left( \begin{array}{cc} 0 & -i\Gamma \\ i\Gamma & 0 \end{array}\right)$.

In this decomposition of $\h$, the commutator has the form
$$
[E_+,P]=0 \oplus 0 \oplus 0 \oplus 0 \oplus \left( \begin{array}{cc} 0 & CS \\ -CS & 0 \end{array}\right)
$$
Therefore:
\begin{prop}
$P\in\p^p$ if and only if $CS\in\b_p(\ele)$. Moreover, this means the spectrum of $\Gamma$ is of the form $\{\gamma_{n}^+: n\ge 1\}\cup\{\gamma_k^-: k\ge 1\}$, where $\pi/4\le \gamma_n^+<\pi/2$ and $0<\gamma_k^-<\pi/4$ are  strictly monotone (eventually finite) sequences, 
$$
\Gamma=\sum_{k\ge 1} \gamma_k^- G_k^- +\sum_{n\ge 1} \gamma_n^+ G_n^+,
$$
for $G_n^+, G_k^-$ mutually orthogonal  projections in $\ele$, of ranks $r(G^+_n)=r_n^+<\infty$ and $r(G^-_k)=r_k^-<\infty$,
 and
$$
\{r_k^-\gamma_k^-\} , \ \{\pi/2-r_n^+\gamma_k^+\}\in\ell^p.
$$
\end{prop}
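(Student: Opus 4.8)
The plan is to reduce everything to the single positive operator $CS=\frac12\sin(2\Gamma)$ and then read off the spectral conditions from the elementary asymptotics of $\sin$ near $0$ and $\pi/2$.

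First, for the equivalence $P\in\p^p \iff CS\in\b_p(\ele)$, I would use the Halmos form of the commutator already recorded above: $\h$ is an orthogonal direct sum of five subspaces reducing both $E_+$ and $P$, on the first four of which $[E_+,P]=0$, and on the generic part $\h_0\cong\ele\times\ele$ the commutator equals $M=\left(\begin{array}{cc} 0 & CS \\ -CS & 0\end{array}\right)$. Since Schatten norms are unitarily invariant and additive over orthogonal direct sums, $[E_+,P]\in\b_p(\h)$ iff $M\in\b_p(\ele\times\ele)$. A direct computation of $M^*M=\mathrm{diag}((CS)^2,(CS)^2)$ shows that the singular values of $M$ are exactly those of $CS$, each with doubled multiplicity; hence $M\in\b_p \iff CS\in\b_p(\ele)$, which settles the first assertion.

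Next I would record that $CS=\cos(\Gamma)\sin(\Gamma)=\frac12\sin(2\Gamma)$ by functional calculus for the selfadjoint $\Gamma$, and that $CS\ge 0$ since $\sin(2\gamma)>0$ on $(0,\pi/2)$; thus the singular values of $CS$ are its eigenvalues $\frac12\sin(2\gamma)$, $\gamma\in\sigma(\Gamma)$. Membership in $\b_p$ forces compactness of $\sin(2\Gamma)$, hence the spectral projection of $\Gamma$ on $\{\gamma:\sin(2\gamma)\ge\epsilon\}$ is finite rank for every $\epsilon>0$; since this set is a compact subinterval of $(0,\pi/2)$, the spectrum of $\Gamma$ inside any such interval is finite and consists of eigenvalues of finite multiplicity. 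Letting $\epsilon\to 0$, the spectrum of $\Gamma$ in $(0,\pi/2)$ is a finite or infinite sequence of finite-multiplicity eigenvalues accumulating at most at $0$ and $\pi/2$. Separating those below $\pi/4$ from those in $[\pi/4,\pi/2)$ yields the strictly monotone families $\gamma_k^-\searrow 0$ and $\gamma_n^+\nearrow\pi/2$ with finite-rank spectral projections $G_k^-,G_n^+$, which is the claimed spectral form of $\Gamma$.

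Finally, for the $\ell^p$ conditions I would translate $\sin(2\Gamma)\in\b_p$ into summability of its singular values counted with multiplicity, namely $\sum_k r_k^-\sin(2\gamma_k^-)^p<\infty$ and $\sum_n r_n^+\sin(2\gamma_n^+)^p<\infty$, and then compare with the linear model: $\sin(2\gamma)\asymp\gamma$ as $\gamma\to 0^+$, and, with $u=\pi/2-\gamma$, $\sin(2\gamma)=\sin(2u)\asymp\pi/2-\gamma$ as $\gamma\to(\pi/2)^-$. These two-sided bounds make the above series equivalent to $\sum_k r_k^-(\gamma_k^-)^p<\infty$ and $\sum_n r_n^+(\pi/2-\gamma_n^+)^p<\infty$, i.e. the sequences $\gamma_k^-$ and $\pi/2-\gamma_n^+$, each listed according to its multiplicity, lie in $\ell^p$. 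I expect no step to be genuinely deep; the main obstacle is careful bookkeeping: justifying the passage from "the commutator is $p$-Schatten" to the weighted series over $\sigma(\Gamma)$ while tracking multiplicities and the doubling from $M$, and making the comparison between $\sin(2\gamma)^p$ and the linear model uniform near both endpoints so that the full equivalence of the series (not just one implication) holds.
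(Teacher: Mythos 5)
Your proposal is correct and follows essentially the same route as the paper: reduce to the generic part via the Halmos decomposition, identify $CS=\frac12\sin(2\Gamma)$, use compactness to get the eigenvalue structure of $\Gamma$ accumulating only at $0$ and $\pi/2$, and compare $\sin(2\gamma)$ with the linear model at both endpoints to obtain the $\ell^p$ conditions. The paper's own proof is far terser (it declares the first assertion ``clear'' and leaves the summability comparison implicit), so your added bookkeeping --- the singular-value computation for the off-diagonal commutator block and the two-sided bounds $\sin(2\gamma)\asymp\gamma$ near $0$, $\sin(2\gamma)\asymp\pi/2-\gamma$ near $\pi/2$, with eigenvalues counted according to multiplicity --- simply fills in what the paper omits.
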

\begin{proof}
The first assertion is clear. Note that $CS=\cos(\Gamma)\sin(\Gamma)=\frac12\sin(2\Gamma)$. Thus, the facts that $\sin(2\Gamma)\in\b_p(\ele)$ and $0\le 2\Gamma\le \pi$, means that the spectrum of $\Gamma$ consists of eigenvalues which accumulate only (eventually) at $0$ and $\pi$. Since $C$ and $S$ have trivial null spaces, neither $0$ nor $\pi/2$ are eigenvalues of $\Gamma$.
\end{proof}

\subsection{Examples}

\begin{enumerate}
\item
Let $\h=L^2(\mathbb{T})$, $\mathbb{T}=\{z\in\mathbb{C}: |z|=1\}$ with normalized Lebesgue measure, and $\h_+=H^2(\mathbb{D})$ the Hardy space of the disk. Let $\varphi:\mathbb{T}\to\mathbb{C}$ be non vanishing and $C^1$. Then the projection $P_{\varphi H^2(\mathbb{D})}$ onto $\varphi H^2(\mathbb{D})$ belongs to the restricted Grassmannian given by the subspace $\h^2(\mathbb{D})$ (see \cite{segal}), with $[P_+,P_{\varphi H^2(\mathbb{D})}]\in\b_1(L^2(\mathbb{T}))$. Thus $P_{\varphi H^2(\mathbb{D})}\in\mathbb{D}_3$. 
\item
Let $\h=L^2(\mathbb{R}^n)$ (with Lebesgue measure),
$\Omega\subset\mathbb{R}^n$ a measurable set with
$|\Omega|<\infty$ and $\h_+=L^2(\Omega)$,
regarded a the subspace of $L^2(\mathbb{R}^n)$ of
classes of functions with essential support contained
in $\Omega$. Denote by
$F:L^2(\mathbb{R}^n)\to L^2(\mathbb{R}^n)$
the Fourier-Plancherel transform. Put
$P=FE_+F^{-1}$,
which projects onto functions whose Fourier transform
is supported in $\Omega$.
It is known  (see for instance \cite{follandsitaram}) that 
$E_+P\in\b_1(L^2(\mathbb{R}^n))$. Thus, in particular,
$$
[E_+,P]=E_+P-PE_+=E_+P-(E_+P)^*\in\b_1(L^2(\mathbb{R}^n)).
$$
Moreover, Lenard proved in  \cite{lenard} that 
$$
R(E_+)\cap R(P)=R(E_+)\cap N(P)=N(E_+)\cap R(P)=\{0\},
$$
and that $N(E_+)\cap N(P)$ is infinite dimensional. Therefore, since $E_+PE_+$, $E_+PE_-$ and $E_-PE_+$ are compact, it follows that $\pi(P)$ is of the form
$$
\pi(P)=\left( \begin{array}{cc} 0 & 0 \\ 0 & p \end{array}\right),
$$ 
with $p\ne 0, 1$, because  $P$ is an infinite rank projection, with $N(P)$ infinite dimensional.  That is, $P\in\mathbb{E}_2$ .
\item
Let $B\in\b_p(\ele)$, and put $\h=\ele\times\ele$ and $\h_+=\ele\times 0$. Consider the idempotent (non orthogonal projection)
$$
E_B=E=\left( \begin{array}{cc} 1 & B \\ 0 & 0 \end{array} \right)
$$ 
with $R(E)=\h_+$. Consider $P_{R(E^*)}$ the orthogonal projection onto $R(E^*)$. It is known (see for instance \cite{ando}), that if $Q$ is an idempotent operator, then $P_{R(Q)}=Q(Q+Q^*-1)^{-1}$. In our case, note that 
$(E^*+E-1)^2=\left(\begin{array}{cc} 1+BB^* & 0 \\ 0 & 1+ B^*B \end{array}\right)$, so that
$$
P=P_{R(E^*)}=E^*(E^*+E-1)^{-1}=E^*(E^*+E-1)(E^*+E-1)^{-2}
$$
$$
=
\left(\begin{array}{cc} (1+BB^*)^{-1} & B(1+B^*B)^{-1} \\ B^*(1+BB^*)^{-1} & B^*B(1+B^*B)^{-1} \end{array}\right).
$$
Note that the $1,2$ entry $B(1+B^*B)^{-1}$ belongs to $\b_p(\h)$, with singular values which have the same asymptotic behaviour as those of $B$. Clearly, $\pi(P)$ is of the form $\left(\begin{array}{cc} 1 & 0 \\ 0 & 0 \end{array}\right)$, i.e. $P\in\mathbb{D}_3$. The index (of the  $1,1$ entry) of $P$ is $0$.
\end{enumerate}

\section{The classes $\mathbb{D}_3$ and $\mathbb{D}_4$}

Let us show that $\mathbb{D}_3$ coincides with the $p$-restricted Grassmannian induced by $E_+$. Recall that (see for instance \cite{segal})  the \textit{$p$-restricted Grassmannian} $Gr^p_{res}(E_+)$ relative to $E_+$,  is the space projections $P$ in $\h$  such that
\begin{itemize}
\item $E_+|_{R(P)}:R(P)\to \h\in\b(R(P),\h)$
is a $p$-Fredholm operator (i.e., there exist $S\in \b(\h,R(P))$ such that $SE_+|_{R(P)}=1+M$ and $E_+|_{R(P)}S=1+N$, for $M\in\b_p(R(P))$, $N\in\b_p(\h)$), and
\item $E_-|_{R(P)}:R(P)\to \h\in\b_p(R(P),\h)$.
\end{itemize}
The components of the restricted Grassmannian are parametrized by $k\in\mathbb Z$, where $k$ is the index of the operator $E_+|_{R(P)}:R(P)\to \h\in\b(R(P),\h)$,
$$
Gr_{res,k}^p(E_+)=\{P\in Gr^p_{res}: ind(E_+|_{R(P)}:R(P)\to \h)=k\}.
$$
In particular, note that $E_+\in Gr_{res,0}^p(E_+)$.

The coincidence of the $p$-restricted Grassmannian of $\h_+$ and $\mathbb{D}_3$ follows from this result:
\begin{teo}
Denote by $\o(E_+)$ the unitary orbit of $E_+$ under the action of $\u_{\a^p}$,
$\o(E_+)=\{UE_+U^*: U\in\u_{\a^p}\}$. Then
$$
\o(E_+)=\{P\in\p^p: P-E_+\in\b_p(\h)\}=\{P\in\p: P-E_+\in\b_p(\h)\}.
$$
\end{teo}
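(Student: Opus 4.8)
Throughout write $E_-=1-E_+$ and recall from the Halmos section that $C=\cos\Gamma$, $S=\sin\Gamma$. The three sets are handled by two easy steps and one substantial inclusion. First, $\{P\in\p^p: P-E_+\in\b_p(\h)\}=\{P\in\p: P-E_+\in\b_p(\h)\}$ is immediate: $\subseteq$ is trivial, and for $\supseteq$, if $P\in\p$ and $P-E_+\in\b_p(\h)$ then $[P,E_+]=[P-E_+,E_+]\in\b_p(\h)$ since $\b_p(\h)$ is an ideal, so $P\in\a^p$ and hence $P\in\p^p$. Next, $\o(E_+)\subseteq\{P\in\p:P-E_+\in\b_p(\h)\}$: if $U\in\u_{\a^p}$ and $P=UE_+U^*$, then $P$ is an orthogonal projection and
\[
P-E_+=UE_+U^*-E_+=[U,E_+]U^*\in\b_p(\h),
\]
because $[U,E_+]\in\b_p(\h)$ and $\b_p(\h)$ is an ideal. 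Thus the whole content is the reverse inclusion: given $P\in\p$ with $P-E_+\in\b_p(\h)$, I must produce $U\in\u_{\a^p}$ with $UE_+U^*=P$.

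To build $U$ I would use the Halmos decomposition of the pair $(E_+,P)$. Write $\h$ as the orthogonal sum of $R(P)\cap\h_+$, $N(P)\cap\h_-$, $K_1:=R(P)\cap\h_-$, $K_2:=N(P)\cap\h_+$, and the generic part $\h_0\cong\ele\times\ele$ on which $E_+$ and $P$ are unitarily equivalent to $E_+^0$ and $P_0$ as displayed above. Since $P-E_+\in\b_p(\h)\subseteq\k(\h)$, the diagonal blocks $-S^2$ and $S^2$ and the off‑diagonal block $CS$ of $P_0-E_+^0$ all lie in $\b_p(\ele)$, and $K_1,K_2$ are finite–dimensional, of dimensions $d_1,d_2$. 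Compactness of $S^2=\sin^2\Gamma$ with $0<\gamma<\pi/2$ forces every eigenvalue $\gamma$ of $\Gamma$ to accumulate only at $0$; then, near $0$, $CS\sim\Gamma$, so $CS\in\b_p(\ele)$ upgrades this to $\Gamma\in\b_p(\ele)$. Consequently $X=X_{E_+^0,P_0}$, extended by $0$ off $\h_0$, lies in $\a^p_{ah}$, whence $e^{iX}\in\u_{\a^p}$ and $e^{iX}E_+e^{-iX}$ equals $P$ on $\h_0$ and equals $E_+$ on the four remaining subspaces.

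When the index $k=\operatorname{ind}(E_+|_{R(P)})=d_1-d_2$ vanishes, I finish with a finite–rank correction. Let $U_1$ be the identity on $R(P)\cap\h_+$, on $N(P)\cap\h_-$ and on $\h_0$, and a unitary interchange of $K_1$ and $K_2$ (possible because $d_1=d_2$). Then $U_1-1$ has finite rank, so $[U_1,E_+]\in\b_p(\h)$ and $U_1\in\u_{\a^p}$; a direct check gives $U_1E_+U_1^*=P$ on the four non‑generic subspaces and $=E_+$ on $\h_0$. Therefore $U:=U_1e^{iX}\in\u_{\a^p}$ satisfies $UE_+U^*=P$ on all of $\h$.

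The main obstacle is the case $k\neq0$, where $d_1\neq d_2$ and no unitary matches $K_2$ with $K_1$ locally; the index must be absorbed by the infinite–dimensional pieces. I would handle this by first fixing reference unitaries of every index inside $\u_{\a^p}$. For orthonormal bases $\{e_n\}_{n\ge0}$ of $\h_+$ and $\{f_n\}_{n\ge0}$ of $\h_-$, the bilateral shift $W$ defined by $e_n\mapsto e_{n+1}$, $f_0\mapsto e_0$, $f_n\mapsto f_{n-1}$ ($n\ge1$) is unitary with $[W,E_+]=-\,e_0\otimes f_0$ of rank one, so $W\in\u_{\a^p}$ and $\operatorname{ind}(W)=-1$; its powers realise every integer index. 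Given $P$ of index $k$, by additivity of the index under unitary conjugation one may choose the power $W^k$ so that $P':=W^kP(W^k)^*$ has index $0$; since $W^k\in\u_{\a^p}$ one still has $P'-E_+\in\b_p(\h)$, so the index‑$0$ construction yields $U'\in\u_{\a^p}$ with $U'E_+U'^*=P'$, and then $U:=(W^k)^*U'\in\u_{\a^p}$ gives $UE_+U^*=P$. The two points requiring care are the verification that $\Gamma\in\b_p(\ele)$ (so that $e^{iX}$ genuinely lies in $\u_{\a^p}$, not merely among the bounded unitaries) and the bookkeeping that conjugation by $W^k$ shifts the index by exactly $k$; both are elementary once the Halmos normal form and Proposition~\ref{polar} are in hand.
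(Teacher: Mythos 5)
Your proposal is correct, but it takes a genuinely different route from the paper's. The paper splits off $N(P-E_+)=\left(R(P)\cap\h_+\right)\oplus\left(N(P)\cap\h_-\right)$, which reduces both projections, and on $\h'=N(P-E_+)^\perp$ takes the polar decomposition $P'-E_+'=V'|P'-E_+'|$, asserting that the symmetry $V'$ satisfies $V'E_+'V'=P'$ and extending it by the identity --- one stroke, no case distinction. You instead work with the Halmos five-fold decomposition: a rotation $e^{iX}$ on the generic part (your verification that compactness of $S^2$ confines the spectrum of $\Gamma$ near $0$, so that $CS\in\b_p(\ele)$ upgrades to $\Gamma\in\b_p(\ele)$, is exactly the needed point), a finite-rank swap of the corners $K_1=R(P)\cap\h_-$, $K_2=N(P)\cap\h_+$ when the index vanishes, and conjugation by powers of the explicit shift $W\in\u_{\a^p}$ to absorb a nonzero index. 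Your longer route buys something real: it is the only one of the two that actually treats the index. A selfadjoint unitary in $\a^p$ has selfadjoint, hence index-zero, $1,1$-corner, so no symmetry in $\u_{\a^p}$ can conjugate $E_+$ onto a projection of nonzero index; concretely, for $\h=\ell^2(\ZZ)$, $\h_+=\overline{\operatorname{span}}\{e_n:n\ge 0\}$ and $P$ the projection onto $\overline{\operatorname{span}}\{e_n:n\ge -1\}$, one has $P-E_+$ of rank one, yet $\h'=\CC e_{-1}$ and the paper's $V$ is the identity, which does not intertwine. Indeed, the identity that the polar decomposition of $P'-E_+'$ really yields, via $(P'-E_+')E_+'=(1-P')(P'-E_+')$, is $V'E_+'V'=1-P'$; the intertwining relation comes rather from $P'+E_+'-1$, whose kernel is precisely $K_1\oplus K_2$, so one is forced into your finite-dimensional matching and the index enters unavoidably. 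Thus your shift $W$ with $[W,E_+]$ of rank one is not redundant bookkeeping but exactly what closes the nonzero-index case, which the paper's printed argument glosses over. Two cosmetic remarks: $X=X_{E_+^0,P_0}$ is selfadjoint, so it is $iX$ that lies in $\a^p_{ah}$ (harmless, since $e^{iX}\in\u_{\a^p}$ either way); and since in fact $X\in\b_p(\h)$, you get $[e^{iX},E_+]\in\b_p(\h)$ immediately from $e^{iX}-1\in\b_p(\h)$.
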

\begin{proof}
If $P=UE_+U^*$ for some $U\in\u_{\a^p}$, then 
$$
P-E_+=UE_+U^*-E_+=(UE_+-E_+U)U^*=[U,E_+]U^*\in\b_p(\h):
$$
Clearly
$\{P\in\p^p: P-E_+\in\b_p(\h)\}\subset\{P\in\p: P-E_+\in\b_p(\h)\}$.
Suppose that $P\in\p$ such that $P-E_+\in\b_p(\h)$.
It is easy to see that
$$N(P-E_+)=R(P)\cap \h_+\oplus N(P)\cap \h_-.$$
Also it is clear that both summands reduce $P$ and $E_+$.
Then $\h'=N(P-E_+)^\perp$ reduces $P$ and $E_+$.
Denote by $P'$ and $E_+'$ the reductions.
It is straightforward that
$[P',E_+']\in\b_p(\h')$.
Since $P'-E_+'$
is selfadjoint and has trivial null space,
if one performs the polar decomposition 
$$
P'-E_+'=V'|P'-E_+'|,
$$
the isometric part $V'$ is a symmetry (a selfadjoint unitary) in $\h'$. Also, the fact that $S'=P'-E_+'$ satisfies $S'E_+'=E_+'S'$ implies that $V'$ intertwines $E_+'$ and $P'$: $V'E_+'V'=P'$. Then, it also follows that $V'\in\{X'\in\b(\h'): [X',E_+']\in\b_p(\h')$, the algebra $\a^p$ in $\h'$ corresponding to the reduced projection $E_+'$. Indeed,
$$
V'E_+'-E_+'V'=(V'E_+'V'-E_+')V'\in\b_p(\h').
$$
Consider now the unitary operator (in fact symmetry) $V$ of $\h$, which is given in terms of the decomposition $\h=\h'\oplus (R(P)\cap \h_+) \oplus (N(P)\cap \h_-)$ is given by 
$$
V'\oplus 1 \oplus 1.
$$
Note that in this same decomposition, $P$ and $E_+$ are given by
$$
P=P'\oplus 1 \oplus 0 \ \hbox{ and } \ E_+=E_+'\oplus 1 \oplus 0.
$$
Then 
$$
[V,E_+]=(V'E_+'-E_+'V')\oplus 0 \oplus 0 \in\b_p(\h),
$$
and 
$$
VE_+V=(V'E_+'V')\oplus 1 \oplus 0 =P.
$$ 
\end{proof}
\begin{coro}
$\mathbb{D}_3=\o(E_+)$
\end{coro}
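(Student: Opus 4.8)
The plan is to deduce the Corollary from the theorem above, which identifies $\o(E_+)$ with $\{P\in\p^p: P-E_+\in\b_p(\h)\}$. By definition $P\in\mathbb{D}_3$ means exactly that $P\in\p^p$ and $\pi(P)=\pi(E_+)$, i.e.\ that $P-E_+$ is \emph{compact}. So the task reduces to comparing two ideals: for $P\in\p^p$, I must show that the a priori weaker condition $P-E_+\in\k(\h)$ already forces $P-E_+\in\b_p(\h)$. One inclusion, $\{P\in\p^p:P-E_+\in\b_p(\h)\}\subseteq\mathbb{D}_3$, is immediate from $\b_p(\h)\subset\k(\h)$; the whole content lies in the reverse inclusion.

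For the reverse inclusion I would fix $P\in\mathbb{D}_3$, written $P=\left(\begin{array}{cc} x & a \\ a^* & y \end{array}\right)$ with respect to $\h=\h_+\oplus\h_-$, and exploit the relations $x-x^2=aa^*$, $y-y^2=a^*a$ and $xa+ay=a$ recorded at the start of this section, together with $a\in\b_p(\h_-,\h_+)$. The off-diagonal blocks of $P-E_+$ are $a$ and $a^*$, hence already lie in $\b_p$. The key algebraic step is to compute $(P-E_+)^2$: using the three relations above the off-diagonal blocks cancel and one is left with the diagonal operator
$$
(P-E_+)^2=\left(\begin{array}{cc} 1-x & 0 \\ 0 & y \end{array}\right).
$$
Since $P-E_+$ is selfadjoint, $P-E_+\in\b_p(\h)$ if and only if $(P-E_+)^2\in\b_{p/2}(\h)$, so everything reduces to proving $1-x\in\b_{p/2}(\h_+)$ and $y\in\b_{p/2}(\h_-)$.

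To prove these I would argue by domination on spectral subspaces. Because $P\in\mathbb{D}_3$, the operator $x-1$ is compact, so the spectrum of $x$ accumulates only at $1$; in particular the spectral subspace of $x$ where $x<1/2$ is finite dimensional. On its complement, where $x\ge 1/2$, the operator inequality $0\le 1-x\le 2x(1-x)=2aa^*$ holds, and $aa^*\in\b_{p/2}(\h_+)$ because $a\in\b_p$; since the singular values of $1-x$ are then dominated by those of $2aa^*$, one gets $1-x\in\b_{p/2}(\h_+)$. The operator $y$ is handled symmetrically, using $a^*a=y-y^2$, the compactness of $y$ (so that its spectrum accumulates only at $0$ and the subspace where $y>1/2$ is finite dimensional), and the inequality $y\le 2y(1-y)=2a^*a$ where $y\le 1/2$. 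Hence $(P-E_+)^2\in\b_{p/2}(\h)$, so $P-E_+\in\b_p(\h)$, and the theorem above gives $P\in\o(E_+)$. Together with the easy inclusion this yields $\mathbb{D}_3=\o(E_+)$.

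I expect the only genuine obstacle to be the step where compactness is converted into a finite-rank error: one must check carefully that $\pi(P)=\pi(E_+)$ confines all the non-negligible spectrum of $x$ (resp.\ $y$) to a finite-dimensional spectral subspace, so that the domination by $aa^*$ (resp.\ $a^*a$) applies on the complementary cofinite part. This is exactly the point where the hypothesis $P\in\mathbb{D}_3$, rather than merely $P\in\p^p$, is indispensable, and it is consistent with the $\ell^{p/2}$-summability of the relevant eigenvalues already recorded in Lemma \ref{simetriaespectros} and Theorem \ref{forma general}.
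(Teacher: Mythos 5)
Your proof is correct, and its skeleton is the same as the paper's: both deduce the corollary from the preceding theorem by showing that, for $P\in\p^p$, the condition $\pi(P)=\pi(E_+)$ (i.e.\ $P-E_+\in\k(\h)$) already forces $P-E_+\in\b_p(\h)$, the easy inclusion coming from $\b_p(\h)\subset\k(\h)$. Where you diverge is in how this promotion from compact to $p$-summable is carried out. The paper reads it off the spectral picture of Theorem \ref{forma general}: compactness of $x-1$ and $y$ leaves only finitely many eigenvalues $\alpha_n$, and the $\ell^{p/2}$-summability of $1-\beta_m$ was already recorded there, whence $P-E_+\in\b_p(\h)$. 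You instead rederive the summability from scratch via the identity
$$
(P-E_+)^2=\left(\begin{array}{cc} 1-x & 0 \\ 0 & y \end{array}\right),
$$
the equivalence $P-E_+\in\b_p(\h)\Leftrightarrow (P-E_+)^2\in\b_{p/2}(\h)$ for selfadjoint operators, and the domination $0\le E(1-x)E\le 2Eaa^*E$ on the cofinite spectral subspace $E=\chi_{[1/2,1]}(x)$ (symmetrically for $y$ on $\chi_{[0,1/2]}(y)$), together with $aa^*,a^*a\in\b_{p/2}$. All the steps check out: the off-diagonal blocks of $(P-E_+)^2$ vanish precisely because of $xa+ay=a$; compactness of $x-1$ (resp.\ $y$) does confine the spectrum of $x$ below $1/2$ (resp.\ of $y$ above $1/2$) to a finite-dimensional spectral subspace; and the eigenvalue monotonicity under $0\le A\le B$ is a statement about singular value sequences, so it is valid for all $1\le p<\infty$, including $p<2$ where $\b_{p/2}$ is merely a quasi-normed ideal. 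The net effect is a modest but genuine gain in self-containedness: your argument bypasses Theorem \ref{forma general} entirely, using only the block relations $x-x^2=aa^*$, $y-y^2=a^*a$, $xa+ay=a$. Both proofs rest on the same underlying mechanism (compactness pins the spectrum of $x$ near $1$ and of $y$ near $0$ up to finite rank, and there $x-x^2=aa^*$ converts $p$-summability of $a$ into $p/2$-summability of $1-x$ and $y$); the paper's version is shorter given its earlier machinery, while yours would be preferable if one wanted the corollary independent of the spectral enumeration.
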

\begin{proof}
$P\in\o(E_+)$, if and only if $P-E_+\in\b_p(\h)$, and then $\pi(P-E_+)=0$,
i.e. $\pi(P)=\pi(E_+)$. Conversely, in $\pi(P)=\pi(E_+)$, then $P-E_+$ is compact.
Using a suitable unitary isomorphism we may suppose (as in Theorem \ref{forma general})   $\h=\ele\times\ele$ and $\h_+=\ele\times 0$, and use the spectral picture of $P\in\p^p$. Note that the assumption that $P-E_+$ is compact implies that $x-1$ and $y$ are compact. Thus, following the notation of Theorem \ref{forma general}, one has that there are finitely many $\alpha_n$ and that $1-\beta_n$ is a sequence in $\ell^{p/2}$. It follows that $P-E_+\in\b_p(\h)$.
\end{proof}

\begin{rem}
Note that in particular, this facts imply that  $\mathbb{D}_3\in E_++\b_p(\h)$, i.e. $\mathbb{D}_3$ is contained in the affine space obtained as a translation of $\b_p(\h)$. Thus, the tangent spaces belong naturally inside $\b_p(\h)$. We shall profit from this condition, in order to endow the manifold $\mathbb{D}_3$ with the natural Finsler metric, which consists in considering the $p$-norm at every tangent space.
\end{rem}
\begin{rem}
In a similar fashion (or using the symmetry $P\mapsto P^\perp$), one proves that 
$$
\mathbb{D}_4=\o(E_-)=\{P\in\p: P-E_-\in\b_p(\h)\},
$$
which coincides with the $p$-restricted Grassmannian $G_{res}^p(E_-)$ induced by $E-$. Similarly, one can consider the Finsler $p$-norm structure in $\mathbb{D}_4$.
\end{rem}

In general, if $P$ and $Q$ are projections, $\|P-Q\|\le 1$. 
\begin{prop}
If $P\in\p^p$ satisfies that $\|P-E_+\|<1$, then $P\in\mathbb{D}_3$. Similarly, if $\|P-E_-\|<1$, then $P\in\mathbb{D}_4$.
\end{prop}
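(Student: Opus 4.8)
The plan is to reduce everything to a statement about two commuting projections in the Calkin algebra and then invoke the characterization proved in the Corollary above, namely that $P\in\mathbb{D}_3$ if and only if $\pi(P)=\pi(E_+)$. First I would observe that since $P\in\p^p$, the commutator $[P,E_+]$ lies in $\b_p(\h)\subset\k(\h)$, so that $\pi([P,E_+])=0$; that is, $\pi(P)$ and $\pi(E_+)$ are \emph{commuting} projections in the C$^*$-algebra $\b(\h)/\k(\h)$. Second, because $\pi$ is a $*$-homomorphism onto the Calkin algebra and hence norm-contractive, the hypothesis gives
$$
\|\pi(P)-\pi(E_+)\|=\|\pi(P-E_+)\|\le\|P-E_+\|<1.
$$

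The key step is the elementary fact that two commuting projections $e,f$ in a C$^*$-algebra with $\|e-f\|<1$ must coincide. I would check that $e(1-f)$ and $f(1-e)$ are mutually orthogonal projections (using $ef=fe$ together with $(1-f)f=0$), so that $e-f=e(1-f)-f(1-e)$ is a difference of orthogonal projections; such an element satisfies $(e-f)^3=e-f$ and therefore has spectrum contained in $\{-1,0,1\}$. Being self-adjoint, its norm lies in $\{0,1\}$, so the strict inequality $\|e-f\|<1$ forces $e-f=0$. Applying this with $e=\pi(P)$ and $f=\pi(E_+)$ yields $\pi(P)=\pi(E_+)$, and the Corollary above then places $P$ in $\mathbb{D}_3$. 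The $E_-$ statement follows by the symmetric argument (equivalently, by applying the symmetry $P\mapsto P^\perp=1-P$): the same computation gives $\pi(P)=\pi(E_-)$, which by the corresponding remark characterizes membership in $\mathbb{D}_4$.

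I expect the only genuine content to be the commuting-projections lemma; everything else is the contractivity of $\pi$ and the already-established description of the discrete classes. The one point to keep straight—and the reason the argument works—is that closeness of two projections does \emph{not} in general force equality, only unitary equivalence. What upgrades ``close'' to ``equal'' here is precisely the commutativity of $\pi(P)$ and $\pi(E_+)$ in the Calkin algebra, which is guaranteed by the membership $P\in\p^p$, i.e.\ by $[P,E_+]$ being compact.
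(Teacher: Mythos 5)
Your proof is correct, and it takes a genuinely different route from the paper's. The paper argues on the Hilbert-space level via the Halmos decomposition: $\|P-E_+\|<1$ first forces $R(P)\cap\h_-=N(P)\cap\h_+=\{0\}$, and on the generic part $(P-E_+)^2=\left(\begin{smallmatrix} S^2 & 0 \\ 0 & S^2 \end{smallmatrix}\right)$, so the angle operator $\Gamma$ has spectrum accumulating only at $0$; feeding this into the spectral picture of Theorem \ref{forma general} shows that the eigenvalues of $x$ accumulate only at $1$ and those of $y$ only at $0$, i.e. $\pi(P)=\pi(E_+)$. You instead pass to the Calkin algebra at the outset: $[P,E_+]\in\b_p(\h)\subset\k(\h)$ makes $\pi(P)$ and $\pi(E_+)$ commuting projections, contractivity of $\pi$ gives $\|\pi(P)-\pi(E_+)\|<1$, and your commuting-projections lemma --- whose proof is sound: with $ef=fe$ one checks $(e-f)^3=e-f$, so the selfadjoint element $e-f$ has spectrum in $\{-1,0,1\}$ and norm $0$ or $1$ --- forces $\pi(P)=\pi(E_+)$. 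That places $P$ in $\mathbb{D}_3$ directly by the definition of the class in Remark \ref{tipos}; you do not even need the Corollary $\mathbb{D}_3=\o(E_+)$. Your argument is shorter and strictly more general: it uses only compactness of $[P,E_+]$, so the Schatten exponent plays no role and the same proof works verbatim in the essentially-commuting setting of \cite{conmcomp}. What the paper's route buys is finer geometric information along the way (the vanishing of the two intersections and the location of the spectral accumulation points of $x$ and $y$), which is of a piece with the spectral machinery already set up. Your closing caveat is also exactly the right thing to keep straight: without commutativity in the Calkin algebra, $\|e-f\|<1$ would yield only unitary equivalence of $e$ and $f$, not equality.
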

\begin{proof}
Recall Halmos decomposition. Clearly, $\|P-E_+\|<1$ implies that $R(P)\cap\h_-=N(P)\cap\h_+=\{0\}$. Indeed, a unit vector $\xi\in R(P)\cap \h_-$ satisfies $\|(P-E_+)\xi\|=\|\xi\|=1$, and thus $R(P)\cap \h_-=\{0\}$; and similarly for the other intersection. 
Note that
$$
P-E_+=0\oplus 0\oplus 0\oplus 0\oplus \left(\begin{array}{cc} -S^2 & CS \\ CS & S^2 \end{array}\right)
$$
and therefore $(P-E_+)^2=\left(\begin{array}{cc} S^2 & 0 \\ 0 & S^2\end{array}\right)$.
Then $\|P-E_+\|<1$ implies that the spectrum of $\Gamma$ cannot accumulate at $\pi/2$, and therefore only accumulates only at the origin. Thus, analysing
the spectral picture of $P=\left(\begin{array}{cc} C^2 & CS \\ CS & S^2\end{array}\right) $ according to Theorem \ref{forma general}, it is clear that the eigenvalues of $x=C^2$ accumulate only at $1$ and the eigenvalues of $y=S^2$  accumulate only at the origin.

The proof for the case $\|P-E_-\|<1$ is analogous.
\end{proof}
\begin{coro}
If $P\in\mathbb{E}_j$, $1\le j\le 5$, then $\|P-E_+\|=\|P-E_-\|=1$.
\end{coro}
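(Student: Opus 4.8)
The plan is to obtain this as an immediate consequence of the preceding Proposition, read contrapositively, together with the elementary bound $\|P-Q\|\le 1$ for any two orthogonal projections $P,Q$ (recorded just before that Proposition). The crucial structural point is that the nine classes of Remark \ref{tipos} partition $\p^p$; hence membership of $P$ in any essential class $\mathbb{E}_j$ precludes membership in the discrete classes $\mathbb{D}_3$ and $\mathbb{D}_4$.

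First I would observe that for $P\in\mathbb{E}_j$ with $1\le j\le 5$, one has in particular $P\notin\mathbb{D}_3$ and $P\notin\mathbb{D}_4$. The contrapositive of the first statement of the preceding Proposition then gives $\|P-E_+\|\ge 1$ (since $\|P-E_+\|<1$ would force $P\in\mathbb{D}_3$), and the contrapositive of its second statement gives $\|P-E_-\|\ge 1$ (since $\|P-E_-\|<1$ would force $P\in\mathbb{D}_4$).

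On the other hand, $P$, $E_+$ and $E_-$ are all orthogonal projections, so the general bound yields $\|P-E_+\|\le 1$ and $\|P-E_-\|\le 1$. Combining the two inequalities in each case gives $\|P-E_+\|=\|P-E_-\|=1$, as desired.

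I do not anticipate any genuine obstacle: the corollary is a direct logical consequence of the Proposition and the norm inequality for differences of projections. The only step meriting explicit mention is the invocation of the partition in Remark \ref{tipos}, which is what guarantees that the essential classes avoid $\mathbb{D}_3\cup\mathbb{D}_4$.
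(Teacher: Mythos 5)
Your proposal is correct and is precisely the argument the paper intends: the corollary is stated without proof because it follows immediately from the preceding Proposition (read contrapositively, using that the nine classes of Remark 4.5 partition $\p^p$, so $P\in\mathbb{E}_j$ excludes $P\in\mathbb{D}_3\cup\mathbb{D}_4$) together with the general bound $\|P-Q\|\le 1$ for orthogonal projections. No gaps; nothing further is needed.
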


\subsection{Finsler metric in the discrete classes}

In this subsection we examine the relationship between the $p$-Finsler metric of $\mathbb{D}_3$ and $\mathbb{D}_4$, i.e., the metric which arises when endowing each tangent space of these manifolds with the $p$-norm,  with the ambient metric given by the norm $\| \ \ \|_{\infty,p}$ of $\a^p$.

Let us reason with $\mathbb{D}_3$, the same facts hold analogously for $\mathbb{D}_4$.  

Let us first compare the Finsler metric with the metric induced by the $p$ norm of the affine space $E_++\b_p(\h)$ (if $X=E_++A, Y=E_++B\in E_++\b_p(\h)$, the $p$ distance $\|X-Y\|_p=\|A-B\|_p<\infty$ is defined).

In \cite{al} it was proven that if $P,Q$ lie in the same component $G_{res,k}^p(E_+)=\mathbb{D}_3$ of the $p$-restricted Grassmannian , then there exists a minimal geodesic of the form
$\delta(t)=e^{itX}P e^{-itX}$, with $X^*=X\in\b_p(\h)$ $P$-codiagonal and $\|X\|\le\pi/2$, such that $\delta(1)=Q$, so that
the geodesic distance $d_p(P,Q)=\|X\|_p$. We recall that $d_p$ is formally defined as
$$
d_p(P,Q)=\inf\{ \int_I \|\dot{\gamma}(t)\|_p\  d t : \gamma:I\to\p^p, \gamma \hbox{ is smooth with endpoints } P,Q\}.
$$
\begin{prop}
With the current notations,
if $P,Q$ lie in the same component of $\mathbb{D}_3$
(resp. $\mathbb{D}_4$),
then
$$\frac{2}{\pi} d_q(P,Q)\le \|P-Q\|_p\le d_p(P,Q).$$
\end{prop}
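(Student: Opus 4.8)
The plan is to reduce both inequalities to a single scalar inequality applied spectrally, using the minimal geodesic furnished by \cite{al}. Fix $P$ and $Q$ in the same connected component of $\mathbb{D}_3$. By the cited result there is a selfadjoint, $P$-codiagonal operator $X\in\b_p(\h)$ with $\|X\|\le\pi/2$ such that $Q=e^{iX}Pe^{-iX}$ and $d_p(P,Q)=\|X\|_p$. Everything will follow once I express $\|P-Q\|_p$ through the eigenvalues of $X$ and compare the result with $\|X\|_p$.

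The key step is to compute $P-Q$ explicitly. Let $\epsilon=2P-1$ be the symmetry associated to $P$; the hypothesis that $X$ is $P$-codiagonal means exactly that its diagonal blocks (in the $P$-decomposition) vanish, which is equivalent to the anticommutation relation $X\epsilon=-\epsilon X$. Hence $X^n\epsilon=(-1)^n\epsilon X^n$ for all $n$, and therefore $e^{iX}\epsilon=\epsilon e^{-iX}$. This gives
$$
2Q-1=e^{iX}(2P-1)e^{-iX}=e^{iX}\epsilon e^{-iX}=\epsilon e^{-2iX},
$$
so that $2(Q-P)=\epsilon(e^{-2iX}-1)$. Since $\epsilon$ is unitary on $\h$ it preserves the $p$-norm, whence $\|P-Q\|_p=\tfrac12\|e^{-2iX}-1\|_p$.

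Now $X$, being a selfadjoint element of $\b_p(\h)$, is diagonalizable with real eigenvalues $\mu_k$ (repeated according to multiplicity), and $e^{-2iX}-1$ has eigenvalues $e^{-2i\mu_k}-1$ of modulus $2|\sin\mu_k|$. Consequently $\|P-Q\|_p=\tfrac12\|e^{-2iX}-1\|_p=\big(\sum_k|\sin\mu_k|^p\big)^{1/p}=\|\sin X\|_p$. It remains only to compare $\|\sin X\|_p$ with $\|X\|_p=d_p(P,Q)$: because $\|X\|\le\pi/2$, every eigenvalue obeys $|\mu_k|\le\pi/2$, and on $[0,\pi/2]$ the elementary (Jordan) inequality $\tfrac2\pi t\le\sin t\le t$ holds, so $\tfrac2\pi|\mu_k|\le|\sin\mu_k|\le|\mu_k|$ for every $k$. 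Raising to the $p$-th power and summing over $k$ yields
$$
\tfrac2\pi\|X\|_p\le\|\sin X\|_p\le\|X\|_p,
$$
which, after substituting $\|\sin X\|_p=\|P-Q\|_p$ and $\|X\|_p=d_p(P,Q)$, is precisely the asserted chain. The argument for $\mathbb{D}_4$ is identical with $E_+$ replaced by $E_-$.

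I expect the only delicate point to be the identity $\|P-Q\|_p=\|\sin X\|_p$: it rests on the anticommutation relation $e^{iX}\epsilon=\epsilon e^{-iX}$ extracted from codiagonality, and on reading off the $p$-norm of $e^{-2iX}-1$ from the spectrum of $X$. Once this is in place, both bounds are no more than Jordan's inequality applied eigenvalue by eigenvalue, and in particular the upper estimate $\|P-Q\|_p\le d_p(P,Q)$ comes for free from $|\sin t|\le|t|$, without invoking any general rectifiability argument.
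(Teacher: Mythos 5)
Your proof is correct, and it diverges from the paper's in instructive ways, though both rest on the same key input: the minimal geodesic $Q=e^{iX}Pe^{-iX}$ with $X=X^*\in\b_p(\h)$ $P$-codiagonal, $\|X\|\le\pi/2$ and $d_p(P,Q)=\|X\|_p$, quoted from the same reference. For the lower bound the paper expands $[P,e^{iX}]$ as a power series (using that $P$ commutes with even powers of $X$) to obtain $[P,e^{iX}]=i\,\sinc(X)[P,X]$, and then estimates $\|\sinc(X)^{-1}\|\le\pi/2$ as an operator bound; your anticommutation identity $e^{iX}\epsilon=\epsilon e^{-iX}$ (with $\epsilon=2P-1$) reaches the equivalent but sharper exact formula $\|P-Q\|_p=\|\sin X\|_p$ more cleanly, and the eigenvalue-by-eigenvalue Jordan inequality $\tfrac{2}{\pi}|t|\le|\sin t|\le|t|$ then delivers both estimates at once. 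The genuine divergence is in the upper bound: the paper obtains $\|P-Q\|_p\le d_p(P,Q)$ without invoking the geodesic at all, simply because every smooth curve in $\p^p$ joining $P$ to $Q$ takes values in the affine space $E_++\b_p(\h)_h$, where the length of any curve dominates the chord $\|P-Q\|_p$; your version instead leans on the minimality statement $d_p(P,Q)=\|X\|_p$ for this direction too. Both are valid; the paper's upper bound is more robust (it would survive even without existence or minimality of geodesics), while your route is unified, makes the constant $2/\pi$ transparent, and yields the exact identity $\|P-Q\|_p=\|\sin X\|_p$ as a bonus, which in particular shows where the two metrics are closest to being comparable.
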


\begin{proof}
The inequality $\|P-Q\|\le d_p(P,Q)$ is clear: if one takes the infimum
among all smooth curves with values in $E_++\b_p(\h)_h$, which is an affine space, one obtains the norm distance $\|P-Q\|_p$.

Let $X=X^*\in\b_p(\h)$ be the exponent of the geodesic joining $P$ and $Q$: $\|X\|\le \pi/2$, $X$ is $P$-codiagonal, and $Q=e^{i X}Pe^{-iX}$. Note that
$$
\|P-Q\|_p=\|P-e^{iX}Pe^{-iX}\|_p=\|(Pe^{iX}-e^{iX}P)e^{-iX}\|_p=\|[P,e^{iX}]\|_p.
$$
Since $X$ is $P$-codiagonal, $P$ commutes the the even powers of $X$, and thus
$$
[P,X^{2n+1}]=PX^{2n}X-X^{2n}XP=X^{2n}(PX-XP)=X^{2n}[P,X].
$$
It follows that
$$
[P,e^{iX}]=[P, 1+iX-\frac{1}{2}X^2-\frac{i}{3 !}X^3+\frac{1}{4 !} X^4+\dots]=i\{[P,X]-\frac{1}{3 !}[P,X^3]+\frac{1}{5 !}[P,X^5]-\dots\}
$$
$$
=i\{1-\frac{1}{3 !}X^2+\frac{1}{5 !}X^4-\dots\}[P,X]=i \ \sinc (X)[P,X],
$$
where $\sinc$ denotes the {\it cardinal sine} function,
which is the entire function given by
$\sinc(t)=\frac{\sin(t)}{t}$ ($\sinc(0)=1$). It $|t|\le\pi/2$, this function verifies that
$$
\frac{2}{\pi}\le \sinc(t)\le 1.
$$ 
In particular, since $X$ is selfadjoint with spectrum in $[-\pi/2,\pi/2]$,
$S=\sinc(X)$ is an invertible operator, with $\|S^{-1}\|\le \frac{\pi}{2}$.
Therefore
$$
\|X\|_p=\|S^{-1}SX\|_p\le \|S^{-1}\|\|SX\|_p\le\frac{\pi}{2}\|SX\|_p,
$$
i.e.,
$$
\frac{2}{\pi}\|X\|_p\le \|SX\|_p=\|i\ \sinc(X)[P,X]\|_p=\|P-Q\|_p.
$$
\end{proof}

\begin{rem}
By the above remarks, if $P,Q\in\mathbb{D}_3$,
then $P-Q\in\b_p(\h)_h$.
Denote
$P-Q=A=
\left(\begin{array}{cc}
A_{11} & A_{12}\\
A_{12}^* & A_{22}
\end{array}\right)$.
Since $A_{ij}=E_{ij}A$ for appropriate elementary
(partial isometric) operators, it is clear that
$A_{ij}\in\b_p(\h)$.
Moreover
$\|A_{ij}\|_p=\|E_{ij}A\|_p\le\|E_{ij}\|\|A\|_p=\|A\|_p$.
Then
$$\begin{array}{ccl}
\|P-Q\|_{\infty,p}& = & \|A_{11}\|+\|A_{22}\|+\|A_{12}\|_p+\|A_{12}^*\|_p \\
& \leq & \|A_{11}\|_p+\|A_{22}\|_p+\|A_{12}\|_p+\|A_{12}^*\|_p \\
& \leq &  4\|P-Q\|_p.
\end{array}$$
However, these two metrics are not equivalent in $\mathbb{D}_3$ 
Indeed, fix an orthonormal basis $\{f_n\}$ for $\h_-$,
and  consider $P=E_++D$ and $Q=E_++F$, where $D,F\le E_-$,
project onto mutually orthogonal subspaces generated by finite
(disjoints) subsets of the basis $\{f_n\}$.
Then $\|P-Q\|_{\infty,p}=\|D-F\|=1$, whereas
$\|P-Q\|_p=(\rank(E)+\rank(F))^{1/p}$.
Since these ranks are arbitrary,
the metrics are non equivalent.

\end{rem}

\section{Connectedness of the essential classes}

In this section we prove our main result in the classes $\mathbb{E}_i, 1\le i\le 5$, namely, that each of these spaces is connected. The proof of this result is similar to the proof of the analogous result in \cite{conmcomp}. We shall sketch the argument, emphasizing only the necessary modifications.

Recall from Theorem \ref{forma general}, the form of a projection $P=\left( \begin{array}{cc} x & a \\ a^* & y \end{array}\right)\in\p^p$:
$$ P=
\left(\begin{array}{ccc}
\d\sum_n \alpha_n  P_n+ \d\sum_m \beta_m Q_m+E_1
& | &
\d\sum_k \lambda_k \xi_k\otimes \xi'_k + \d\sum_l \mu_l\eta_l\otimes \eta'_l \\ 
 ---------- & &  ----------\\
\d\sum_k \lambda_k \xi'_k\otimes \xi_k + \d\sum_l \mu_l\eta'_l\otimes \eta_l
& | & 
\d\sum_n (1-\alpha_n)P'_n + \d\sum_m(1-\beta_m)Q'_m +E'_1
\end{array}\right),$$
where the relevant facts we need now are that that $r(P_n)=r(P'_n)<\infty$, $r(Q_m)=r(Q'_m)<\infty$, and $\alpha_n$, $1-\beta_m$ belong to $\ell^{p/2}$.
 Consider the projection
$$
P_0=\left(\begin{array}{ccc}
\d\sum_n P_n+ \d\sum_m Q_m+E_1+N
& | &
0 \\ 
 ---------- & &  ----------\\
0
& | & 
\d\sum_n P'_n + \d\sum_m Q'_m +E'_1+N'
\end{array}\right),
$$
where $N$ and $N'$ are the projections onto the nullspaces of $x$ and $y$, respectively.

The first step of the argument is the following.
\begin{lem}
The operator $B=P+P_0-1$ is invertible in $\a^p$, and belongs to the connected component of the identity.
\end{lem}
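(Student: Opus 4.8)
The plan is to use that $B=P+P_0-1$ is selfadjoint and that $P$ and the block-diagonal $P_0$ are simultaneously reduced by the spectral/Halmos decomposition of Theorem~\ref{forma general}, turning $B$ into a direct sum of the scalars $\pm1$ and of scalar $2\times2$ blocks whose determinants can be read off by hand. First I record that $B\in\a^p$: the corners $B_{12}=a$ and $B_{21}=a^*$ coincide with those of $P$, hence lie in $\b_p$, so $[B,E_+]\in\b_p(\h)$; and $B=B^*$, being a real combination of the selfadjoint $P,P_0,1$.

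Next I split $\h$ into the subspaces reducing both $P$ and $E_+$. The four ``non-generic'' pieces $N=N(P)\cap\h_+$, $E_1=R(P)\cap\h_+$, $N'=N(P)\cap\h_-$ and $E'_1=R(P)\cap\h_-$ are precisely the $0$- and $1$-eigenspaces of $x$ and $y$; there both $P$ and $P_0$ are scalar, so $B$ equals $-1$ on $N,N'$ and $+1$ on $E_1,E'_1$. On the generic part, Lemma~\ref{simetriaespectros} pairs $R(P_n)$ with $R(P'_n)$ and $R(Q_m)$ with $R(Q'_m)$; reading off the corners of $P_0$ (which vanish on $R(P_n),R(Q'_m)$ and are the identity on $R(P'_n),R(Q_m)$), on each pair $B$ is, with $\lambda_n=\sqrt{\alpha_n-\alpha_n^2}$ and $\mu_m=\sqrt{\beta_m-\beta_m^2}$, a scalar $2\times2$ block of one of the two types
$$
\left(\begin{array}{cc}\alpha_n-1 & \lambda_n \\ \lambda_n & 1-\alpha_n\end{array}\right) \qquad\hbox{or}\qquad \left(\begin{array}{cc}\beta_m & \mu_m \\ \mu_m & -\beta_m\end{array}\right).
$$
Each has trace $0$ and determinant $-(1-\alpha_n)$, resp. $-\beta_m$, so its eigenvalues are $\pm\sqrt{1-\alpha_n}$, resp. $\pm\sqrt{\beta_m}$. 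Since $\alpha_n<\tfrac{1}{2}\le\beta_m$, every eigenvalue of $B$ has modulus in $[\tfrac{1}{\sqrt{2}},1]$; thus $\sigma_{\b(\h)}(B)$ is bounded away from $0$ and $B$ is invertible in $\b(\h)$ with $\|B^{-1}\|\le\sqrt{2}$.

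To upgrade invertibility to $\a^p$ I use that $\b_p(\h)$ is an ideal: from $B\in\a^p$ and $B^{-1}\in\b(\h)$,
$$
[B^{-1},E_+]=-B^{-1}[B,E_+]B^{-1}\in\b_p(\h),
$$
so $B^{-1}\in\a^p$ and $B\in\G_{\a^p}$. For the component, I compute the $1,1$ corner
$$
B_{11}=\sum_n(\alpha_n-1)P_n+\sum_m\beta_m Q_m+E_1-N,
$$
whose spectrum lies in $[-1,-\tfrac{1}{2}]\cup[\tfrac{1}{2},1]$, so $B_{11}$ is invertible and has Fredholm index $0$. Since the components of $\G_{\a^p}$ are parametrized by the index of the $1,1$ entry and $1$ has index $0$, $B$ lies in the identity component. (Alternatively, $B$ being selfadjoint and invertible, the path $t\mapsto|B|^{-t}B$, defined by holomorphic functional calculus, stays in $\G_{\a^p}$ and joins $B$ to the symmetry $\mathrm{sgn}(B)$, reducing connectedness to the same index count.)

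The one genuinely quantitative point — and the step I expect to be the main obstacle — is obtaining a lower bound on $|\sigma(B)|$ that is uniform over the infinitely many $2\times2$ blocks, so that $B^{-1}$ is bounded; this is exactly where the spectral gap $\alpha_n<\tfrac{1}{2}\le\beta_m$ between the two sequences enters. The remainder is bookkeeping: correctly matching the reducing subspaces with the eigenspace pairing of Lemma~\ref{simetriaespectros} and with the corners of $P_0$.
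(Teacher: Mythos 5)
Your proof is correct, and it is genuinely more self-contained than the paper's. The paper's own proof is essentially a citation: it invokes Lemma 5.1 of \cite{conmcomp} for the facts that $B$ is invertible, that its commutator with $E_+$ is compact, and that its $1,1$ entry is invertible, and then only adds that the off-diagonal entries of $B$ are those of $P$ (hence $B\in\a^p$) and that $B$ has index $0$, so it lies in the identity component of $\G_{\a^p}$. You prove the imported facts directly: the reduction of $B$ by the four non-generic subspaces and by the eigenspace pairing of Lemma \ref{simetriaespectros}, the trace-zero $2\times2$ blocks with determinants $-(1-\alpha_n)$ and $-\beta_m$, and the resulting bound $\sigma(B)\subset[-1,-\tfrac{1}{\sqrt2}]\cup[\tfrac{1}{\sqrt2},1]$ are all correct and yield the explicit estimate $\|B^{-1}\|\le\sqrt2$. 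Your verification that $B^{-1}\in\a^p$ via $[B^{-1},E_+]=-B^{-1}[B,E_+]B^{-1}$ makes explicit an inverse-closedness step the paper leaves tacit, and your index computation for $B_{11}$ coincides with the paper's closing remark. In short, the paper's route buys brevity; yours buys independence from \cite{conmcomp} plus quantitative control.

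One point should be flagged: your block computation silently corrects the definition of $P_0$, and this correction is forced. As printed in the paper, the diagonal entries of $P_0$ are $\sum_n P_n+\sum_m Q_m+E_1+N$ and $\sum_n P'_n+\sum_m Q'_m+E'_1+N'$, which are the full spectral resolutions of the identity for $x$ and $y$; taken literally this gives $P_0=1$, hence $B=P$, which is never invertible unless $P=1$. The projection your argument actually uses, with diagonal entries $\sum_m Q_m+E_1$ and $\sum_n P'_n+E'_1$ (so that $P_0$ vanishes on $R(P_n)$, $R(Q'_m)$, $N$ and $N'$), is the only diagonal choice built from spectral projections of $x$ and $y$ for which the lemma can hold: putting the identity on the wrong member of a pair produces blocks with eigenvalues $\pm\sqrt{\alpha_n}$ or $\pm\sqrt{1-\beta_m}$, which accumulate at $0$, while $E_1$, $E'_1$ must be included and $N$, $N'$ excluded to avoid a kernel. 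This must be the choice intended by the citation to \cite{conmcomp}, and your proof establishes the lemma for it.
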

\begin{proof}
In \cite{conmcomp}, Lemma 5.1, it was shown that $B$ is invertible,  that its commutator with $E_+$ is compact, and that its $1,1$ entry is invertible. Here, $[B,P_+]$ is the $P_+$-codiagonal matrix whose non nil entries are those of $P$, and therefore $B\in\a^p$. Clearly, it belongs to the component of zero index.
\end{proof}
The operator $B$ is selfadjoint, and satisfies $BP=P_0B$, i.e. $BPB^{-1}=P_0$.  Then the unitary part $V$ in the polar decomposition $B=V|B|$, satisfies $VPV^*=P_0$. Clearly, $V$ also belongs to the connected component of the identity in $\u_{\a^p}$.  It follows that $P$ and $P_0$ belong to the same connected component of $\p^p$.

The next step is to show that any pair of {\it diagonal} essential projections in the same class $\mathbb{E}_i$, are conjugate by an element in the connected component of the identity of $\u_{\a^p}$:

Let $F, G$ be two projections, which are diagonal with respect to $E_+$, both in the same essential class.
\begin{itemize}
\item
If $F, G\in\mathbb{E}_1$,  are of the form
$$
F=\left(\begin{array}{cc} P_+ & 0 \\ 0 & F_- \end{array}\right) \ , \ \ G=\left(\begin{array}{cc} P'_+ & 0 \\ 0 & G_- \end{array}\right)
$$
where $P_+, P'_+$ are projections of infinite rank and co-rank, and $F_-, G_-$ are of finite rank. One can show that $F$ and $G$ are unitarily equivalent to the projection
$$
\left(\begin{array}{cc} P_+ & 0 \\ 0 & 0 \end{array}\right),
$$ 
with a unitary in operator in $\a^p$, which belongs to the connected component of the identity. First note that with a unitary of the form $\left(\begin{array}{cc} U_+& 0 \\ 0 & 1 \end{array}\right)$, one can connect $G$ with $\left(\begin{array}{cc} P_+ & 0 \\ 0 & G_- \end{array}\right)$. That is, we may suppose $P'_+=P_+$. Next we construct the unitary operator $U$ given in the proof of Lemma 5.2 of \cite{conmcomp}, which is a finite rank perturbation of the identity, and therefore also in the connected component of the identity of the invertible group of $\a^p$. This unitary connects $F$ to $\left(\begin{array}{cc} P_+ & 0 \\ 0 & 0 \end{array}\right)$, and then  $\mathbb{E}_1$ is connected.
\item
The connectedness of $\mathbb{E}_3$  can be obtained noting that the mapping $P\mapsto P^\perp=1-P$ tansforms $\mathbb{E}_1$ into $\mathbb{E}_3$.
\item
The case of $\mathbb{E}_2$ is analogous to the case of $\mathbb{E}_1$, and therefore the case of $\mathbb{E}_4$ also follows.
\item
If $F,G\in\mathbb{E}_5$, they are of the form
$$
F=\left(\begin{array}{cc} F_+ & 0 \\ 0 & F_- \end{array}\right) \ , \ \ G=\left(\begin{array}{cc} G_+ & 0 \\ 0 & G_- \end{array}\right),
$$
with $F_\pm, G_\pm$ of infinite rank and co-rank. Clearly these projections are unitarily equivalent with a diagonal unitary matrix, which therefore belong to the connected component of the identity in the invertible group of $\a^p$.
\end{itemize}
Thus, we have the following:
\begin{teo}
The classes $\mathbb{E}_i$, $1\le i\le 5$ are connected. Each of these spaces is the orbit of a fixed (diagonal) projection in the corresponding class,  under the action of  the the unitary operators in the connected component of the identity in $\a^p$.
\end{teo}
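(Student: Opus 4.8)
The plan is to prove the theorem in two stages, mirroring the structure already laid out in the preceding paragraphs. First I would reduce an arbitrary projection $P\in\mathbb{E}_i$ to a \emph{diagonal} representative (diagonal with respect to the decomposition $\h=\h_+\oplus\h_-$), and second I would show that any two diagonal projections in the same essential class are conjugate by a unitary in the connected component of the identity of $\u_{\a^p}$. Combining these yields that each $\mathbb{E}_i$ is a single orbit, hence connected.

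For the first stage I would invoke the preceding Lemma: given $P\in\mathbb{E}_i$ with spectral form as in Theorem~\ref{forma general}, the operator $B=P+P_0-1$ is invertible in $\a^p$ and lies in the connected component of the identity, where $P_0$ is the diagonalized version of $P$ obtained by collapsing each $\alpha_n,\beta_m$ to $0$ or $1$. Since $B$ is selfadjoint and satisfies $BPB^{-1}=P_0$, the unitary part $V$ of the polar decomposition $B=V|B|$ (which lies in $\u_{\a^p}$ by Proposition~\ref{polar}, and in the identity component since $B$ does) intertwines $VPV^*=P_0$. Crucially, $P_0$ remains in the \emph{same} essential class $\mathbb{E}_i$ as $P$, because passing to $P_0$ only alters the compact (indeed $\b_p$) part of the diagonal entries and hence does not change $\pi(P)$. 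Thus every $P\in\mathbb{E}_i$ is connected within $\p^p$ to a diagonal projection in $\mathbb{E}_i$.

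For the second stage I would treat the five classes in the grouping already indicated. For $\mathbb{E}_1$, two diagonal projections $F,G$ have $1,1$-entries $P_+,P_+'$ of infinite rank and co-rank and $2,2$-entries $F_-,G_-$ of finite rank; I would first use a block-diagonal unitary $\mathrm{diag}(U_+,1)$ to arrange $P_+'=P_+$, and then the finite-rank unitary perturbation of the identity constructed in Lemma~5.2 of \cite{conmcomp} to kill the $2,2$-entry, reaching the fixed representative $\mathrm{diag}(P_+,0)$; both unitaries sit in the identity component. The class $\mathbb{E}_3$ follows by applying $P\mapsto P^\perp=1-P$, which is a homeomorphism of $\p^p$ carrying $\mathbb{E}_1$ onto $\mathbb{E}_3$; the class $\mathbb{E}_2$ is symmetric to $\mathbb{E}_1$ (swap the roles of $\h_+$ and $\h_-$), and $\mathbb{E}_4$ follows from $\mathbb{E}_2$ by the same complementation. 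For $\mathbb{E}_5$, both diagonal blocks have infinite rank and co-rank, so $F$ and $G$ are unitarily equivalent by a block-diagonal unitary $\mathrm{diag}(U_+,U_-)$, which automatically lies in the identity component (diagonal unitaries have zero index and the diagonal subgroup is connected).

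The main obstacle I expect is verifying that the intertwining and block-diagonal unitaries genuinely lie in $\u_{\a^p}$ and in the connected component of the identity, rather than merely in the unitary group of $\b(\h)$. The membership in $\a^p$ hinges on checking that the relevant commutators with $E_+$ are $p$-Schatten: for $B$ this is handled by observing that $[B,E_+]$ is $E_+$-codiagonal with entries taken from those of $P$, hence in $\b_p(\h)$; for the block-diagonal and finite-rank unitaries it is immediate since they are either diagonal (zero off-diagonal commutator) or compact perturbations of the identity. Membership in the identity component then follows from the index computation (all these unitaries have index $0$ on the $1,1$-entry) together with Proposition~\ref{polar}, which guarantees that an invertible element and its unitary polar part share the same component. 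I would take care to confirm that the finite-rank and block-diagonal constructions borrowed from \cite{conmcomp} survive the passage from the compact-operator setting to the Banach algebra $\a^p$ with its stronger norm $\|\,\cdot\,\|_{\infty,p}$, but since each construction changes only finitely many coordinates or acts diagonally, the $p$-summability of the off-diagonal blocks is preserved.
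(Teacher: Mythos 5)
Your proposal is correct and takes essentially the same route as the paper: reduce $P$ to the diagonal projection $P_0$ via the invertible $B=P+P_0-1$ and the unitary part of its polar decomposition, then connect diagonal representatives class by class using block-diagonal unitaries, the finite-rank construction of Lemma 5.2 of \cite{conmcomp}, and the symmetry $P\mapsto P^\perp$. Your added verifications (that $\pi(P_0)=\pi(P)$, and that each intertwining unitary lies in $\u_{\a^p}$ with index $0$, hence in the identity component) only make explicit what the paper leaves implicit.
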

\bigskip
\bigskip
\bigskip

\textsc{(Esteban Andruchow)}
Instituto de Ciencias,
Universidad Nacional de Gral. Sarmiento,
J.M. Gutierrez 1150,  (1613) Los Polvorines,
Argentina and Instituto Argentino de Matem\'atica,
`Alberto P. Calder\'on', CONICET, Saavedra 15 3er. piso,
(1083) Buenos Aires, Argentina.

\noi e-mail: {\sf eandruch@ungs.edu.ar}

{\sc (Mar\'\i a Eugenia Di Iorio y Lucero)} {Instituto Argentino
de Matem\'atica, `Alberto P. Calder\'on', CONICET,
Saavedra 15 3er. piso,
(1083) Buenos Aires, Argentina.}



\begin{thebibliography}{XX}

\bibitem{ando} T. Ando,  Unbounded or bounded idempotent operators in Hilbert space,  Linear Algebra Appl. 438 (2013) 3769--3775.

\bibitem{conmcomp} E. Andruchow, E. Chiumiento, and M.E. Di Iorio y Lucero, Essentially commuting projections.
J. Funct. Anal. 268 (2015), 336--362.

\bibitem{al}
E. Andruchow and G. Larotonda, Hopf-rinow theorem in the sato grassmannian, J. Funct. Anal.  255 (2008), 1692--1712.

\bibitem{tumpach} 
D. Beltita, T.S. Ratiu, and B.A. Tumpach, The restricted Grassmannian, Banach Lie-Poisson
spaces, and coadjoint orbits. J. Funct. Anal. 247 (2007), 138--168.

\bibitem{cprbanach}
G. Corach, H. Porta, and L. Recht, Multiplicative integrals and geometry of spaces of
projections, volume 34 (1990), pages 132--149. 1988  Conference in Honor of Mischa Cotlar 
(Buenos Aires, 1988).


\bibitem{follandsitaram}
G. B. Folland and A. Sitaram, The uncertainty principle: a mathematical survey, J. Fourier
Anal. Appl. 3 (1997),  207--238.

\bibitem{halmos}
P. R. Halmos, Two subspaces, Trans. Amer. Math. Soc. 144 (1969), 381--389.

\bibitem{lenard}
A. Lenard, The numerical range of a pair of projections, J. Funct. Anal. 10 (1972), 410--423.

\bibitem{rae}
I. Raeburn, The relationship between a commutative Banach algebra and its maximal ideal
space, J. Funct. Anal. 25 (1977), 366--390.

\bibitem{rickart}
C. E. Rickart, General theory of Banach algebras, The University Series in Higher Mathematics,
D. van Nostrand Co., Inc., Princeton, N.J.-Toronto-London-New York, 1960.


\bibitem{segal}
G. Segal and G. Wilson, Loop groups and equations of KdV type, Surveys in differential geometry: integral systems [integrable systems], Surv. Differ.
Geom., IV, pages 403--466. Int. Press, Boston, MA, 1998.

\bibitem{zemanek}
J. Zemanek, Idempotents in Banach algebras, Bull. London Math. Soc., 11 (1977), 177--183.


\end{thebibliography}
\end{document}